\definecolor{dullmagenta}{rgb}{0.4,0,0.4}   
\definecolor{darkblue}{rgb}{0,0,0.4}
\definecolor{darkgreen}{rgb}{0,0.6,0}
\definecolor{darkred}{rgb}{0.6,0,0}
\newtheorem {theorem}{Theorem}
\newtheorem {lemma}[theorem]{Lemma}
\newtheorem {proposition}[theorem]{Proposition}
\newtheorem {corollary}[theorem]{Corollary}
\theoremstyle{definition}
\newtheorem {defn}[theorem]{Definition}
\newtheorem {remark}[theorem]{Remark}
\newcommand{\ri}{\right}
\newcommand {\bR}{{\mathbb R}}
\newcommand {\bN}{{\mathbb N}}
\newcommand {\bC}{{\mathbb C}}
\newcommand{\cF}{{\cal F}} %
\newcommand{\bem}{\l(\! \begin{array}}
\newcommand{\eem}{\end{array}\!\ri)}
\newcommand{\bsm}{\left(\begin{smallmatrix}} 
\newcommand{\esm}{\end{smallmatrix}\right)}  
\newcommand{\NN}{\nonumber}
\DeclareMathOperator{\arcsinh}{arcsinh}
\DeclareMathOperator{\spec}{Spec}
\newcommand{\tcR}[1]{{#1}}
\newcommand{\wU}{U}
\newcommand{\wM}{M}
\DeclareMathOperator{\sgn}{sign}
\DeclareMathOperator{\Dom}{Dom}
\renewcommand{\Re}{\operatorname{Re}}
\renewcommand{\Im}{\operatorname{Im}}
\newcommand{\placeholder}[2][]{%
  \setlength{\fboxsep}{-\fboxrule}
  \framebox{\rule{0pt}{100pt}\rule{150pt}{0pt}}
}
\newcommand{\ir}{\mathrm{i}}
\newcommand{\dr}{\mathrm{d}}
\newcommand{\er}{\mathrm{e}}
\renewcommand{\phi}{\varphi}
\title{Accumulation of complex eigenvalues of an indefinite Sturm--Liouville operator with a shifted Coulomb potential\thanks{{\bf MSC 2010: }34B24, 34L15, 47E05, 47B50, 33C15}\thanks{{\bf Keywords: } linear operator pencils, non-self-adjoint operators, Sturm--Liouville problem, Coulomb potential, complex eigenvalues, Kummer functions}}
\author{
Michael Levitin\thanks{{\bf ML: }Department of Mathematics and Statistics, University of Reading, Whiteknights, PO Box 220, Reading RG6 6AX,
UK; m.levitin@reading.ac.uk; \url{http://www.personal.reading.ac.uk/\~ny901965/}}
\and 
Marcello Seri\thanks{{\bf MS: }Department of Mathematics, University College London, Gower Street, London WC1E 6BT, UK; m.seri@ucl.ac.uk; \url{http://academic.mseri.me/}}
}
\date{16 September 2015\\typeset \today}
\renewcommand\footnotemark{}
\begin{document}

\maketitle
\begin{abstract}
For a particular family of long-range potentials $V$, we prove that the eigenvalues of the 
indefinite Sturm--Liouville operator $A = \sgn(x)(-\Delta + V(x))$ accumulate to
zero asymptotically along specific curves in the complex plane. 
Additionally, we relate the asymptotics of complex eigenvalues to the two-term asymptotics
of the eigenvalues of associated self-adjoint operators.
\end{abstract}


\section{Introduction}

Given a real-valued potential $V$ such that
\begin{equation}\label{def:Vclass}
  V \in L^{\infty}(\bR),\quad
    \lim_{x\to \pm\infty} V(x) = 0,\quad
    \limsup_{x\to \pm\infty} x^2 V(x) < -\frac14,
\end{equation}
consider a one-dimensional Schr\"{o}dinger operator in $L^2(\bR)$
\begin{equation}\label{def:T}\begin{split}
  T &:= T_V := -\frac{\dr^2}{\dr x^2} + V(x),  \\
  \Dom(T) &:= \left\{
    f\in L^2(\bR) \mid f,f' \in AC(\bR),\; T f\in L^2(\bR)
  \right\}.
\end{split}\end{equation}
It is well known that in this case the spectrum 
$\spec(T)$ is bounded from below, the essential spectrum $\spec_{\mathrm ess}(T)=[0,\infty)$, and the negative spectrum $\spec(T)\cap(-\infty,0)$ 
consists of eigenvalues accumulating to zero from below.

Let $J := \sgn(x)$ be the multiplication operator by $\pm1$ on $\bR_\pm$.
In what follows we consider the point spectrum of the operator
\begin{equation}\label{def:op}
  A := A_V := JT_V, \quad \Dom(A) = \Dom(T).
\end{equation}
This operator is not self-adjoint (and not even symmetric) on $L^2(\bR)$, and its spectrum need not therefore be real.  However, as $J^*=J^{-1}=J$,  $A$ can be treated as 
a self-adjoint operator in the Krein space 
$(L^2(\bR), [\cdot, \cdot])$ with indefinite inner product 
\[
[f,g] := \langle Jf, g\rangle_{L^(\bR)}=\int_\bR f(x) \overline{g(x)} \sgn(x) \dr x
\]
or equivalently as a $J$-self-adjoint operator \cite{ai}.
Operators of type \eqref{def:op} have been studied both 
in the framework of operator pencils, cf. \cite{davieslevitin, pencils},
and of indefinite Sturm--Liouville problems 
\cite{bkt08, behrndttrunk, karabashtrunk, kreinsp}.

In both settings the literature is extensive, starting mostly with Soviet contributions in the 1960s,  
including those by Krein, Langer,
Gohberg, Pontryagin and Shkalikov. We refer to \cite{pencils, kreinsp} 
for reviews and bibliographies.
In particular due to its many applications, for example in control theory, 
mathematical physics and mechanics,  the field is still very active,
with recent works on the theoretical,as well as numerical, aspects, 
(see e.g. \cite{davieslevitin,elp,htvd,v14} and references therein).

\tcR{In the special case of indefinite Sturm--Liouville operators, it is well known that
for positive potentials, $V\geq 0$, the spectrum of $A_V$ is real and the operator $A_V$ is similar to a self-adjoint operator \cite{cula,cuna,py,ko}. 
At a very basic level, this can be seen from the following abstract construction: if $R$ and $S$ are self-adjoint operators with $R>0$, then, under mild restrictions,
the spectrum of $R^{-1}S$ is the same as the spectrum of the self-adjoint operator $R^{-1/2}SR^{-1/2}$, and is therefore real.}

\tcR{The case $V\in L^1(\bR, (1+|x|)\dr x)$ has been considered in \cite{kakoma}, where it is shown that $A_V$ is  self-adjoint  iff $\spec(A_V) = \bR$.
Finally, for (quasi-)periodic finite zone potentials,  \cite{kama} explores some conditions under which $A_V$ is similar to a self-adjoint or a normal operator. For a review of indefinite weighted Sturm--Liouville  problems on a finite interval, see \cite{fl}.}

\tcR{Let us return to our original problem \eqref{def:op}. Recently, there was a rapid growth of interest in the case of non-positive potentials, especially by Behrndt, Trunk, and collaborators \cite{b07,bkt08,behrndttrunk,bpt}, clarifying the structure of their spectra and other properties as well as stating new conjectures on rather unusual spectral behaviour \cite{b13}.}

The following known results are a particular case of 
\cite[Theorem 1 and Theorem 2]{bkt08} and \cite[Theorem 4.2]{bpt}. 
\begin{proposition}\label{prop:one} For the operator $A$ in \eqref{def:op},
\begin{enumerate}[{\bf (a)}]
  \item $\spec(A)$ is symmetric with respect to $\bR$.
  \item $\spec_{\mathrm ess}(A) = \bR$.
  \item $\spec(A)\setminus{\bR}$ consists of eigenvalues of finite multiplicity.
  \item No point of $\overline{\bR}\setminus\{0\}$ is an accumulation point of 
    non-real eigenvalues of $A$.
  \item At least one of the following statements is true:
  \begin{enumerate}[{\bf (i)}]
    \item The non-real eigenvalues of $A$ accumulate only to $0$;
    \item There exist embedded eigenvalues of $A$ in $\bR_+$ that accumulate to $0$;
    \item There exist embedded eigenvalues of $A$ in $\bR_-$ that accumulate to $0$;
    \item The growth of $\lambda \mapsto (A - \mu)^{-1}$ near zero is not of 
      finite order.
  \end{enumerate} 
  \item If additionally $V$ is even, $V(x)=V(-x)$, then $\spec(A)$ is also symmetric with respect to $\ir\bR$.
  \item The non-real spectrum of $A$ is contained in the strip $|\Im\mu|<2\|V\|_\infty$.
\end{enumerate}
\end{proposition}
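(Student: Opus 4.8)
The plan is to split the seven assertions into the ``soft'' structural facts (a)--(c) and (f), the quantitative strip bound (g), and the genuinely spectral-theoretic statements (d)--(e). For the symmetries I would exploit two natural (anti)involutions. Since $V$ and $J=\sgn(x)$ are real, the antilinear conjugation $Cf:=\overline f$ satisfies $CAC=A$, so $Af=\mu f$ forces $A\overline f=\overline\mu\,\overline f$ and, more robustly, $(A-\mu)^{-1}=C(A-\overline\mu)^{-1}C$, giving the reflection symmetry in $\bR$ of (a). For (f), when $V$ is even the parity operator $Pf(x):=f(-x)$ is unitary with $PJP=-J$ and $PTP=T$, whence $PAP=(PJP)(PTP)=-JT=-A$; thus $A$ is unitarily equivalent to $-A$, so $\spec(A)=-\spec(A)$, which together with (a) yields the symmetry in $\ir\bR$. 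For (b)--(c) I would decouple at the origin: imposing a Dirichlet condition at $0$ changes $A$ by a finite-rank (resolvent) perturbation into the orthogonal sum of $-\dr^2/\dr x^2+V$ on $(0,\infty)$ and its negative $-(-\dr^2/\dr x^2+V)$ on $(-\infty,0)$; since $V\to0$ at infinity these have essential spectra $[0,\infty)$ and $(-\infty,0]$, and relatively compact perturbations preserve $\spec_{\mathrm ess}$, so $\spec_{\mathrm ess}(A)=\bR$. On $\bC\setminus\bR$ the operator $A-\mu$ is then Fredholm of index zero, and analytic Fredholm theory makes $(A-\mu)^{-1}$ meromorphic there, giving (c).

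For the strip bound (g) I would argue variationally. If $Af=\mu f$ with $f\neq0$, then $Tf=\mu Jf$, and pairing with $f$ gives $\langle Tf,f\rangle=\mu[f,f]$; as the left-hand side is real, $\Im\mu\neq0$ forces the eigenfunction to be neutral, $[f,f]=0$, whence $\|f'\|^2=-\langle Vf,f\rangle\le\|V\|_\infty\|f\|^2$ and $\int_0^\infty|f|^2=\int_{-\infty}^0|f|^2=\tfrac12\|f\|^2$. Pairing instead with $Jf$ and integrating by parts on each half-line produces the current identity $\Im\mu\,\|f\|^2=2\,\Im\!\big(f'(0)\overline{f(0)}\big)$, so the problem reduces to controlling the boundary flux at $0$ by $\|V\|_\infty$, using the decay of $f$ at $\pm\infty$ together with the energy identity $\|f'\|^2=-\langle Vf,f\rangle$; the factor $2$ in $|\Im\mu|<2\|V\|_\infty$ reflects the two half-line contributions. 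Extracting the \emph{sharp} constant rather than merely some strip is the delicate point here, and is where I would follow the method of \cite{bpt}.

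Finally, (d) and (e) are the heart of the matter and require the Krein-space structure. Viewing $A$ as self-adjoint in $(L^2(\bR),[\cdot,\cdot])$, I would first classify the spectral type of the real essential spectrum: a Weyl sequence at $\lambda>0$ can be localised near $+\infty$, where $J=+1$ and $[u_n,u_n]\to\|u_n\|^2>0$, so every $\lambda>0$ is of positive type, and symmetrically every $\lambda<0$ is of negative type, leaving $0$ (and $\infty$) as the only candidate critical points. The general principle that the non-real spectrum of a $J$-self-adjoint operator cannot accumulate at points of definite type then yields (d), once the strip bound (g) rules out escape to $\infty$. For (e) I would analyse the critical point $0$: negating (iv), i.e. assuming the resolvent grows at finite order near $0$, makes the operator locally definitizable there, so its local spectral function has only finitely many singularities; this forces one of the remaining alternatives---non-real eigenvalues accumulating at $0$, or embedded real eigenvalues in $\bR_\pm$ approaching $0$. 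The main obstacle is precisely this fine analysis of the critical point at $0$ together with the sign-type classification of the essential spectrum, which is the content drawn from \cite[Theorems 1 and 2]{bkt08} and \cite[Theorem 4.2]{bpt}.
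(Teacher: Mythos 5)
The paper does not prove Proposition \ref{prop:one} at all: it is quoted verbatim as a special case of \cite[Theorems 1 and 2]{bkt08} and \cite[Theorem 4.2]{bpt}, so there is no in-paper argument to compare against and your sketch has to be judged against the cited literature. Your treatment of the ``soft'' items is correct and essentially complete: the conjugation symmetry for (a), the identity $PAP=-A$ for (f), the Glazman decoupling at $0$ plus invariance of the essential spectrum under finite-rank resolvent perturbations for (b), and analytic Fredholm theory in each half-plane for (c) (noting, as you implicitly do, that one first needs a point of the resolvent set in each half-plane, which the strip bound supplies). Deferring the sharp constant in (g) to \cite{bpt} is fair; your reduction to the flux identity at $0$ is indeed how such bounds are obtained.

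The genuine gaps are in (d) and (e). For (d), the statement concerns $\overline{\bR}\setminus\{0\}$, i.e.\ it includes non-accumulation at $\infty$, and the strip $|\Im\mu|<2\|V\|_\infty$ does \emph{not} rule out a sequence of non-real eigenvalues with $\Re\mu_n\to\pm\infty$ inside the strip; boundedness of the non-real spectrum has to come from the same local-definitizability/sign-type analysis (or from the real-part bounds in \cite{bpt}), not from (g). More seriously, your argument for (e) never uses the hypothesis $\limsup_{x\to\pm\infty}x^2V(x)<-\tfrac14$, and (e) is simply false without it: for $V\ge 0$ the operator $A_V$ is similar to a self-adjoint operator, so there are no non-real eigenvalues, no embedded eigenvalues accumulating to $0$, and the resolvent has finite-order growth near $0$ --- all four alternatives fail. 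The point of that hypothesis is that $T_V$ then has infinitely many negative eigenvalues accumulating at $0$, which makes $0$ a point of non-definite type from both sides and forces this infinite family to manifest as one of (i)--(iv); your deduction ``finitely many singularities of the local spectral function forces one of the remaining alternatives'' runs the logic backwards, since definitizability is a regularity conclusion and by itself forces no accumulation of anything. To close (e) you would need to exhibit the source of the infinitude (the negative spectrum of $T_V$) and then invoke the dichotomy for locally definitizable operators at the critical point $0$, which is exactly the content of \cite[Theorem 2]{bkt08}.
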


Despite the amount of information on the structure of the spectrum of $A$, known proofs of Proposition \ref{prop:one} are not constructive and, in fact, we do not even know \emph{a priori} which of the four statements (e)(i)--(iv) are true for a particular given potential.  
Some numerical experiments, cf. \cite{bkt08}, have recently led to conjecture that 
statement (e)(i) in Proposition 1 may hold for many potentials satisfying \eqref{def:Vclass}, see Figure \ref{fig:example1}. 

\begin{figure}[htb!]
\centerline{\includegraphics[width=0.8\textwidth]{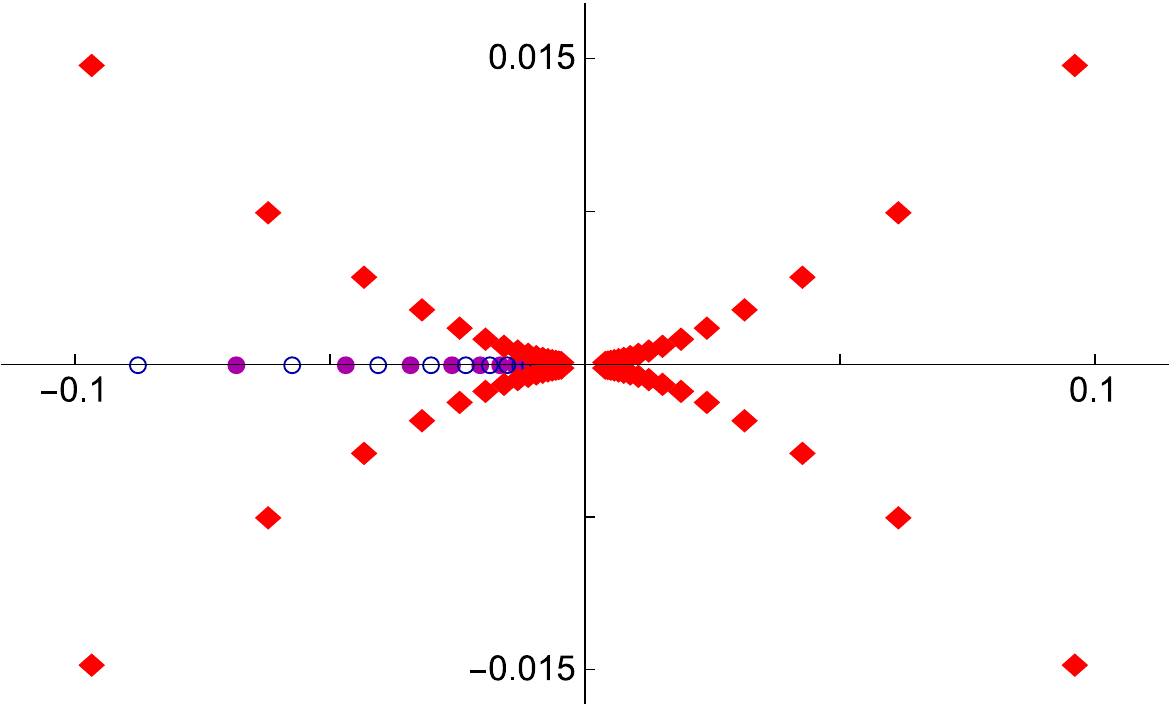}}
\caption{A numerical example showing accumulation to $0$ of complex eigenvalues (red diamonds)
of the operator $A_\gamma$, $\gamma=2.5$. 
The magenta and  white circles on the negative real axis are the eigenvalues 
of $T_\gamma$ corresponding to the eigenfunctions which are even or odd with respect to zero, cf.~\cite{bkt08}.\label{fig:example1}}
\end{figure}

In this paper we prove that for a particular family of potentials 
\begin{equation}\label{def:V}
V(x) = V_\gamma(x) = - \frac \gamma{1+|x|}, \quad \gamma>0,
\end{equation}
Proposition 1(e)(i) holds. Moreover we also prove (Theorem \ref{thm:1}) that the complex eigenvalues of 
\[
T=T_\gamma := T_{V_\gamma}
\]
accumulate to zero asymptotically along specific curves in the complex plane, and that the explicit asymptotics of 
complex eigenvalues of $T_\gamma$ can be obtained from the asymptotics of eigenvalues of the self-adjoint operator
\begin{equation}\label{def:Agamma}
A=A_\gamma := A_{V_\gamma}
\end{equation}
(or, more precisely, from the eigenvalues of its restriction on either even or odd (with respect to zero) subspace). We also 
extend these results to the more general non-symmetric potentials 
\begin{equation}\label{def:Vnonsym}
\quad V_{\gamma_-,\gamma_+}(x) = \begin{cases}
\displaystyle-\frac{\gamma_+}{1+|x|} & \mbox{if } x > 0\\
\displaystyle-\frac{\gamma_-}{1+|x|} & \mbox{if } x < 0
\end{cases}, \quad  \gamma_+, \gamma_- \in \bR_+.
\end{equation}

The rest of the paper is organised as follows. Section 2 contains the statements of our main results. The proofs, as well as some numerical examples,  are in Sections 3--5; they are based on the explicit expressions for Jost solutions of the differential equation 
\[
-\frac{\dr^2}{\dr x^2}g(x)-\frac{\gamma}{1+x}g(x)=\mu g(x)
\]
on $\bR_+$ and a rather delicate asymptotic analysis involving Kummer functions. A brief exposition of some auxiliary results, mainly due to Temme \cite{temme}, which we use in our proofs, is given in the Appendix.

\section{Sharp asymptotics of the eigenvalues of the self-ajdiont operator}
Let $T^{D}_\gamma$ and $T^{N}_\gamma$ denote the restrictions of the operator 
$T_{\gamma}$ to $\bR_+$ with Dirichlet 
and Neumann boundary condition at zero, resp. By the spectral theorem, for symmetric potentials $V_\gamma(x)$
\[
\spec(T_\gamma)=\spec(T^D_\gamma)\cup\spec(T^N_\gamma)
\]
with account of multiplicities.
Let $-\lambda^\#_n(\gamma)$  denote the eigenvalues 
of $T^\#_\gamma$, $\#={}D\text{ or }N$, ordered increasingly.  
In what follows we often  drop the explicit dependence on $\gamma$. 

It is well-known that $-\lambda_n^\#<0$ and $-\lambda_n^N<-\lambda_n^D<-\lambda_{n+1}^N$ for all $n\in\bN$, and also that $-\lambda_n^\#\to0-$ as $n\to\infty$.

Before stating our main results, we need some additional notation.

\begin{defn}\label{defn:Theta} Let $\mathcal{F}$ denote the class of piecewise smooth functions $F:\bR_+\to\bR$ which have a discrete set of singularities (with no finite accumulation points). At each singularity  both one-sided limits of $F$ are  $\pm\infty$ and differ by sign. Assume for simplicity that $0$ is not a singularity of $F$, and that $F(0)=0$. For $F\in\cF$ we denote by 
$\Theta_F(x)$ the continuous branch of the multi-valued $\operatorname{Arctan}(F(x))$ such that $\Theta_F(0)=0$.
\end{defn}

\begin{remark} Away from the singularities of $F$, the function $\Theta_F(x)$ can be written in terms of the ordinary $\arctan(F(x))$ (which takes the values in $\left[-\frac{\pi}{2}, \frac{\pi}{2}\right]$) and 
 the {\em total signed index} of $F$ on $[0,x]$, which we denote by $Z_F(x)$, and which is defined as the total number of jumps from $+\infty$ to $-\infty$ on $[0,x]$ minus the total number of jumps in the opposite direction:
  \begin{equation}\label{eq:Z}
 Z_F(x):=\left(\sum_{\{\tau\in(0,x]\mid \lim\limits_{t\to \tau-}F(t)=+\infty\}}-\sum_{\{\tau\in(0,x]\mid \lim\limits_{t\to \tau-}F(t)=-\infty\}}\right)\,1.
 \end{equation}
 Then
  \begin{equation}\label{eq:Theta}
 \Theta_F(x)=\arctan(F(x))+\pi Z_F(x).
 \end{equation}
 \end{remark}

 Obviously, $\Theta_{-F}(x) = -\Theta_F(x)$.

Our first result gives sharp two-term asymptotics of eigenvalues  (accumulating to zero) of the self-adjoint operators $T_\gamma^\#$.

\begin{theorem}\label{thm:SAevs}
As $n\to \infty$, 
\begin{align*}
\lambda^D_n(\gamma) &= \frac{\gamma^2}{4 n^2} \left( 1
  - \frac{2}{\pi n} \Theta_{R_1}(\gamma) 
  + O\left(\frac1{n^2}\right) \right), \\
\lambda^N_n(\gamma) &= \frac{\gamma^2}{4 n^2} \left( 1
  - \frac{2}{\pi n}\Theta_{R_0}(\gamma) 
  + O\left(\frac1{n^2}\right) \right),
\end{align*}
where 
\[
R_k(\gamma)=\frac{J_k\left(2\sqrt{\gamma}\right)}{Y_k\left(2\sqrt{\gamma}\right)},
\]
and
$J_k$ and $Y_k$ denote the Bessel functions of the first and second kind, respectively. 
\end{theorem}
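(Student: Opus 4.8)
The plan is to reduce the problem on $\bR_+$ to the classical (shifted) Coulomb equation and then read off the asymptotics from the uniform behaviour of confluent hypergeometric functions in the relevant limit. First I would substitute $s=1+x$, turning the eigenvalue equation $-g''-\gamma(1+x)^{-1}g=-\lambda g$ into $g''(s)+(\gamma/s-\lambda)g(s)=0$ on $s>1$, with the boundary condition at $s=1$ being $g=0$ (Dirichlet) or $g'=0$ (Neumann) and with $g\in L^2(1,\infty)$. Writing $\kappa=\gamma/(2\sqrt\lambda)$ and $z=2\sqrt\lambda\,s$, this is the Whittaker equation with $\mu=\tfrac12$, whose unique (up to scale) solution decaying at $+\infty$ is $W_{\kappa,1/2}(z)=\er^{-z/2}z\,U(1-\kappa,2,z)$. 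Hence the eigenvalues are characterised by $W_{\kappa,1/2}(2\sqrt\lambda)=0$ for $T^D_\gamma$ and by $\frac{\dr}{\dr s}W_{\kappa,1/2}(2\sqrt\lambda\,s)\big|_{s=1}=0$ for $T^N_\gamma$.

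The accumulation at $0$ corresponds to $\lambda\to0+$, i.e. $\kappa\to+\infty$ while $z|_{s=1}=2\sqrt\lambda\to0$, with the product staying fixed: $\kappa z|_{s=1}=\gamma$, so that $2\sqrt{\kappa z}=2\sqrt\gamma$ throughout. This is exactly the regime in which $U(1-\kappa,2,z)$ and the companion regular solution admit uniform Bessel-function expansions (Temme \cite{temme}); the index of the Bessel functions is $b-1=1$ (since $b=2$), and their argument is $2\sqrt{\kappa z\,s}=2\sqrt{\gamma s}$. A transparent way to see the index pattern is that at zero energy the equation $g''+(\gamma/s)g=0$ has the exact solutions $\sqrt s\,\mathcal Z_1(2\sqrt{\gamma s})$, where $\mathcal Z_1$ is any Bessel function of order one; imposing $g=0$ at $s=1$ involves $J_1(2\sqrt\gamma),Y_1(2\sqrt\gamma)$, whereas imposing $g'=0$ involves, after the cancellation $\mathcal Z_1'(u)+u^{-1}\mathcal Z_1(u)=\mathcal Z_0(u)$, the functions $J_0(2\sqrt\gamma),Y_0(2\sqrt\gamma)$. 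I would therefore insert the two-term uniform expansion of $W_{\kappa,1/2}$ (and of its $s$-derivative) into the two boundary conditions. In each case this yields a transcendental equation of the form $\cos\Phi(\kappa)\,J_k(2\sqrt\gamma)-\sin\Phi(\kappa)\,Y_k(2\sqrt\gamma)+O(\kappa^{-1})=0$, equivalently $\tan\Phi(\kappa)=R_k(\gamma)+O(\kappa^{-1})$, with $k=1$ for Dirichlet and $k=0$ for Neumann, where $\Phi(\kappa)$ is the connection phase of $W_{\kappa,1/2}$ in the Bessel basis, asymptotically linear in $\kappa$ with slope $\pi$.

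It then remains to solve the quantisation condition. Writing $\kappa_n=n+\delta_n$ reflects the leading hydrogenic spectrum $\lambda_n\approx\gamma^2/(4n^2)$ (indeed for the pure Coulomb problem $W_{\kappa,1/2}(0)=0$ forces $\kappa\in\bN$); the slope-$\pi$ phase pins consecutive roots to consecutive integers $n$, and the correction is fixed to leading order by $\delta_n=\tfrac1\pi\arctan R_k(\gamma)+O(n^{-1})$ modulo an integer. Substituting into $\lambda_n=\gamma^2/(4\kappa_n^2)=\frac{\gamma^2}{4n^2}\bigl(1-\tfrac{2\delta_n}{n}+O(n^{-2})\bigr)$ and identifying $\delta_n$ with $\tfrac1\pi\Theta_{R_k}(\gamma)$ then gives the two claimed expansions.

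The main obstacle is twofold. Technically, the heart of the matter is the uniform asymptotic analysis of $W_{\kappa,1/2}$ and, for the Neumann case, of its $s$-derivative, in the joint limit $\kappa\to\infty$, $z\to0$ with $\kappa z$ fixed: one needs not merely the leading Bessel behaviour but the $O(\kappa^{-1})$ term, with remainder estimates uniform in $\gamma$, and this is precisely where the results of Temme recalled in the Appendix are indispensable. Conceptually, the delicate point is the emergence of the continuous branch $\Theta_{R_k}$ rather than the principal value $\arctan R_k$: as $\gamma$ increases the functions $J_k(2\sqrt\gamma)$, $Y_k(2\sqrt\gamma)$ run through their zeros, so $R_k(\gamma)$ passes through poles and $\arctan R_k$ jumps by $\pi$, whereas the true phase $\Phi$---and hence the genuine correction $\delta_n(\gamma)$---varies continuously. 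Matching the two requires tracking the winding of $\Phi$ through these singularities and identifying it with the signed index $Z_{R_k}$ of Definition~\ref{defn:Theta}; I would anchor the branch at $\gamma=0$, where $R_k(0)=0$ and $\Theta_{R_k}(0)=0$ are consistent with $\delta_n=0$, and propagate it by the continuous (and, via oscillation theory, monotone) dependence of the eigenvalues on $\gamma$, thereby removing the integer ambiguity in $\delta_n$.
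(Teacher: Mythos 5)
Your proposal follows essentially the same route as the paper: writing the $L^2$ solution explicitly as a Whittaker/Kummer function ($W_{\kappa,1/2}(z)=\er^{-z/2}zU(1-\kappa,2,z)$ is exactly the paper's Jost solution), invoking Temme's uniform Bessel-type expansion in the regime $a\to\infty$ with $az$ fixed, reducing the boundary conditions to $\tan\Phi=\pm R_k(\gamma)+O(\kappa^{-1})$ with $k=1$ (Dirichlet) and $k=0$ (Neumann), and resolving the branch ambiguity via the continuous $\Theta_{R_k}$ anchored at $\gamma=0$, which is precisely the content of Definition~\ref{defn:Theta} and Lemma~\ref{lemma:exp}. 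The zero-energy heuristic for the Bessel indices is a nice addition, but the substance of the argument coincides with the paper's proof.
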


\begin{figure}[htb!]
\centerline{
\includegraphics[width=.7\linewidth]{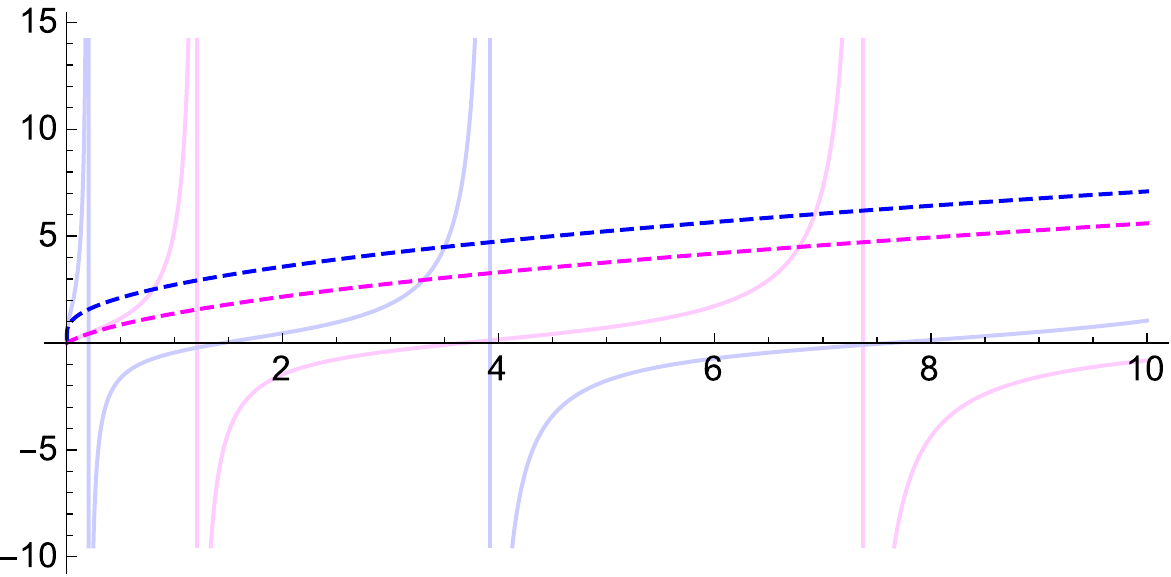}}
\caption{$R_1(\gamma)$ and $R_0(\gamma)$ (resp. magenta and blue line) from Theorem \ref{thm:SAevs} and the corresponding $\Theta_{R_1}(\gamma)$ and $\Theta_{R_0}(\gamma)$ (resp. magenta and blue dashed line).\label{fig:Theta}}
\end{figure}

\begin{figure}[htb!]
\centerline{
\includegraphics[width=.66\linewidth]{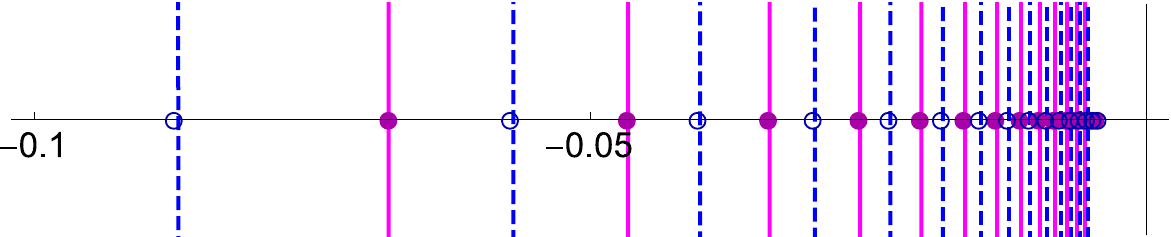}}
\caption{Approximate eigenvalues of $T_\gamma$, $\gamma=2.5$, as described in Theorem \ref{thm:SAevs}. The magenta (resp. white) circles correspond to $-\lambda_n^D$ (resp. $-\lambda_n^N$). 
The solid (resp. dashed) lines are the left-hand sides of \eqref{eq:ev-dirichlet-sa} (resp. \eqref{eq:ev-neumann-sa}) whose roots are the eigenvalues.
Only curves and eigenvalues up to $-0.01$ are displayed.\label{fig:sa}
}
\end{figure}

This immediately implies

\begin{corollary}\label{cor:diff} As $n\to\infty$,
\[
\lambda_n^D-\lambda_n^N=O\left((\lambda_n^\#)^{3/2}\right).
\]
\end{corollary}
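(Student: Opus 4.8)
The plan is to subtract the two asymptotic expansions from Theorem \ref{thm:SAevs} directly and simply read off the order of the remainder; the corollary is labelled as immediate precisely because no new analytic input is required. First I would write both formulas over their common leading factor $\frac{\gamma^2}{4n^2}$ and note that this factor is \emph{identical} in the Dirichlet and Neumann cases, since the dependence on the potential enters only through $\Theta_{R_1}$ and $\Theta_{R_0}$ at the next order. Hence the leading $n^{-2}$ contributions cancel completely in the difference, leaving
\[
\lambda_n^D-\lambda_n^N = \frac{\gamma^2}{4n^2}\left(-\frac{2}{\pi n}\bigl(\Theta_{R_1}(\gamma)-\Theta_{R_0}(\gamma)\bigr) + O\!\left(\frac1{n^2}\right)\right) = -\frac{\gamma^2}{2\pi n^3}\bigl(\Theta_{R_1}(\gamma)-\Theta_{R_0}(\gamma)\bigr) + O\!\left(\frac1{n^4}\right),
\]
so that $\lambda_n^D-\lambda_n^N = O(n^{-3})$.

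Next I would match this against the right-hand side of the claim. From either of the two expansions one has $\lambda_n^\# = \frac{\gamma^2}{4n^2}\bigl(1 + O(n^{-1})\bigr)$, and therefore $(\lambda_n^\#)^{3/2} = \frac{\gamma^3}{8n^3}\bigl(1 + O(n^{-1})\bigr)$, a quantity of exact order $n^{-3}$. Comparing the two estimates yields $\lambda_n^D-\lambda_n^N = O\bigl((\lambda_n^\#)^{3/2}\bigr)$, which is the assertion.

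There is no genuine obstacle here---the corollary is an arithmetic consequence of the sharp two-term asymptotics. The only point worth a word is the interpretation of $\lambda_n^\#$ on the right-hand side: because both $\lambda_n^D$ and $\lambda_n^N$ share the same leading behaviour $\frac{\gamma^2}{4n^2}$, the order of $(\lambda_n^\#)^{3/2}$ is the same for $\#=D$ and $\#=N$, so the bound is unambiguous and the argument is valid for either choice of the superscript.
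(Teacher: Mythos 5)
Your proposal is correct and matches the paper's (implicit) argument: the paper states the corollary as an immediate consequence of Theorem \ref{thm:SAevs}, and your subtraction of the two expansions, giving $\lambda_n^D-\lambda_n^N=O(n^{-3})$ while $(\lambda_n^\#)^{3/2}$ is of exact order $n^{-3}$, is precisely the intended reasoning. Nothing further is needed.
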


Some old papers found both in physical and mathematical literature addressed the problem of approximating the eigenvalues of 
Schr\"odinger operators with shifted Coulomb potentials, see e.g. \cite{geszt,vhareingen} and references
therein. However they were considering somewhat different asymptotic limits, and  to the best of our knowledge the two-terms 
asymptotics of Theorem \ref{thm:SAevs} are new. 

\section{Sharp asymptotics of the eigenvalues of the non-self-ajdiont operator}

Our main result is the following

\begin{theorem}\label{thm:1}
\begin{itemize}
\item[{\bf (i)}] The eigenvalues of $A_\gamma$ lie asymptotically on the curves
\begin{equation}\label{eq:Im_mu}
\left|\Im \mu\right| = \Upsilon(\gamma)\,|\Re \mu|^{3/2} + O\left((\Re \mu)^2\right),\qquad\mu\to 0,
\end{equation}
where
\begin{equation}\label{eq:Ups}
\Upsilon(\gamma)=\frac{1}{\pi\gamma}\log\frac{q^2(\gamma)}{1+q^2(\gamma)}
\end{equation}
and
\begin{equation}\label{eq:q}
q(\gamma):=\pi\sqrt{\gamma}\left( 
    J_0(2\sqrt{\gamma})\, J_1(2\sqrt{\gamma}) + Y_0(2\sqrt{\gamma})\,Y_1(2\sqrt{\gamma})
  \right).
\end{equation}

\item[{\bf (ii)}] More precisely, the eigenvalues  $\{\mu\}_{n\in\bN}$ of $A_\gamma$ in the first quadrant (ordered by decreasing real part) are related to the absolute values  $\lambda_n^\#$ of the eigenvalues of the self-adjoint operators $T_\gamma^\#$, $\#=D$ or $N$, by
\begin{equation}\label{eq:mu_first_quadrant}
\mu_n=\lambda_n^D+\Upsilon^-(\gamma)(\lambda_n^D)^{3/2}+O\left((\lambda_n^D)^{2}\right)=\lambda_n^N+\Upsilon^+(\gamma)(\lambda_n^N)^{3/2}
+O\left((\lambda_n^N)^{2}\right)
\end{equation}
as $n\to\infty$, where
\begin{equation}\label{eq:Ups^mp}
\Upsilon^\mp(\gamma)=\frac{4}{\pi\gamma}\arctan\left(\frac{1}{\ir\mp 2q(\gamma)}\right).
\end{equation}
The expressions for eigenvalues in the other quadrants are obtained by symmetries with respect to 
$\bR$ and $\ir\bR$.
\end{itemize}
\end{theorem}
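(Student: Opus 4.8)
The plan is to reduce the eigenvalue problem for $A_\gamma$ to a single transcendental matching condition at $x=0$ and then to solve it perturbatively using the small-$\mu$ asymptotics of the relevant Jost data. First I would observe that, since $A_\gamma=J T_\gamma$ with $J=+1$ on $\bR_+$ and $J=-1$ on $\bR_-$, an eigenfunction of $A_\gamma$ with eigenvalue $\mu$ must solve $T_\gamma\psi=\mu\psi$ on $\bR_+$ and $T_\gamma\psi=-\mu\psi$ on $\bR_-$, decaying at $\pm\infty$ in each case. Writing $\psi(\cdot\,;\mu)$ for the decaying (Jost) solution of $-\psi''-\frac{\gamma}{1+x}\psi=\mu\psi$ on $\bR_+$ and using the evenness of $V_\gamma$ to realise the left half-line solution as a reflection, the requirement that $\psi$ be $C^1$ across $0$ becomes
\[
m_\gamma(\mu)+m_\gamma(-\mu)=0,\qquad m_\gamma(\mu):=\frac{\psi'(0;\mu)}{\psi(0;\mu)},
\]
where $m_\gamma$ is the Titchmarsh--Weyl function of the half-line operator. (For the non-symmetric potential \eqref{def:Vnonsym} the same argument gives $m_{\gamma_+}(\mu)+m_{\gamma_-}(-\mu)=0$.) By construction $m_\gamma$ is real and meromorphic on $(-\infty,0)$ with simple poles exactly at the Dirichlet eigenvalues $\mu=-\lambda_n^D$ and simple zeros exactly at the Neumann eigenvalues $\mu=-\lambda_n^N$, while it carries a branch cut along $[0,\infty)$ (the essential spectrum of $T_\gamma^\#$); the crucial point is that the boundary value $m_\gamma(\lambda+\ir0)$ for $\lambda>0$ is genuinely complex, and this is the sole source of non-real eigenvalues.

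Second, I would make $m_\gamma$ explicit. The substitution $t=1+x$ turns the equation into a Whittaker equation, so the decaying solution is a Whittaker $W$-function (equivalently a Kummer $U$-function) with large parameter $\sim\gamma/(2\sqrt{-\mu})$ as $\mu\to0$; evaluating it and its derivative at $t=1$ expresses $m_\gamma(\mu)$ as an explicit ratio of Kummer functions. The heart of the argument is then the uniform asymptotic analysis of these Kummer functions in the regime of large parameter, carried out via the Bessel-function approximations of Temme \cite{temme} recalled in the Appendix. This yields, for small $|\mu|$ and uniformly in a complex neighbourhood of $0$ (including the approach to the cut from above), an expansion of $m_\gamma(\mu)$ whose leading coefficients are precisely the Bessel quantities $R_0,R_1$ and $q(\gamma)$ evaluated at $2\sqrt\gamma$. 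Matching the pole/zero pattern of this expansion on $(-\infty,0)$ to the quantisation conditions reproduces Theorem \ref{thm:SAevs}, while the same expansion on $(0,\infty)$ delivers $\Im m_\gamma(\lambda+\ir0)$ in closed form in terms of $q(\gamma)$.

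Third, I would solve $m_\gamma(\mu)+m_\gamma(-\mu)=0$ perturbatively for $\mu$ near the positive real axis. Fixing $n$ large and looking near $\mu=\lambda_n^D$, the term $m_\gamma(-\mu)$ sees the simple Dirichlet pole at $-\lambda_n^D$ while $m_\gamma(\mu)$ is evaluated on the cut and is complex; isolating the pole and inverting gives $\mu_n=\lambda_n^D+r_n^D/m_\gamma(\lambda_n^D+\ir0)+\cdots$, where $r_n^D$ is the residue. Inserting the Temme asymptotics for $r_n^D$ and for $m_\gamma(\lambda_n^D+\ir0)$, together with the two-term asymptotics of $\lambda_n^D$ from Theorem \ref{thm:SAevs}, shows that the correction is $O\!\left((\lambda_n^D)^{3/2}\right)$ with leading coefficient $\Upsilon^-(\gamma)$, after the elementary $\arctan$/$\log$ identity that rewrites $\frac{4}{\pi\gamma}\arctan\frac{1}{\ir-2q}$ in terms of $q(\gamma)$. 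Repeating the computation by anchoring instead at the Neumann zero $-\lambda_n^N$ produces the second representation in \eqref{eq:mu_first_quadrant} with $\Upsilon^+(\gamma)$; the two agree because $\lambda_n^D-\lambda_n^N=O\!\left((\lambda_n^\#)^{3/2}\right)$ by Corollary \ref{cor:diff}. Part (i) then follows by taking imaginary parts, using $\Re\mu_n=\lambda_n^\#(1+o(1))$ and $\Im\Upsilon^\mp(\gamma)=\Upsilon(\gamma)$, and the remaining quadrants are filled in by the $\bR$- and $\ir\bR$-symmetries of Proposition \ref{prop:one}(a),(f).

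The main obstacle is the uniform asymptotic analysis in the second step: one needs the large-parameter expansion of the Kummer functions to hold uniformly for $\mu$ in a full complex neighbourhood of $0$ and right up to the branch cut $[0,\infty)$, with error terms controlled to two orders so as to capture both the leading real part $\sim\lambda_n^\#$ and the subleading complex correction $\sim(\lambda_n^\#)^{3/2}$. Tracking the correct branch of $\sqrt{-\mu}$ across the cut — so that $\Im m_\gamma(\lambda+\ir0)$, and hence $\Im\mu_n$, carries the right sign and magnitude — and verifying that the Bessel data assemble exactly into $q(\gamma)$, $\Upsilon(\gamma)$ and $\Upsilon^\mp(\gamma)$ is the most delicate part of the argument.
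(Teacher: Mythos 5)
Your proposal follows essentially the same route as the paper: reduction of the eigenvalue problem to the $C^1$-matching condition at $x=0$ (the paper works with the determinant \eqref{eq:dt} and explicitly notes the equivalent m-function formulation \eqref{eq:m}), explicit Kummer-function Jost solutions \eqref{def:jost}, Temme's uniform large-parameter asymptotics \eqref{eq:approxUexp}, anchoring the expansion at the Dirichlet and Neumann eigenvalues of Theorem \ref{thm:SAevs} with consistency supplied by Corollary \ref{cor:diff}, and symmetry for the remaining quadrants. The only substantive difference is the final inversion: you extract the $(\lambda_n^\#)^{3/2}$ correction by a pole--residue perturbation of $m_\gamma(-\mu)$ at $-\lambda_n^D$ (resp.\ the zero at $-\lambda_n^N$), whereas the paper substitutes the ansatz $\mu=\lambda+\nu\lambda^{3/2}+\eta\lambda^2$ directly into the determinant equation and solves a tangent identity for $\nu$ --- equivalent reorganisations of the same computation, both resting on the same two-order uniform expansion of the Kummer functions up to the cut.
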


Before proceeding to the proofs, we want to discuss  the statements of  Theorem \ref{thm:1} in more detail. 

\begin{remark}\label{rem:comments}
\begin{itemize}
\item[(a)] It is immediately seen from \eqref{eq:Ups} and \eqref{eq:Ups^mp} that
\[
\Upsilon(\gamma)=\Im\Upsilon^-(\gamma)=\Im\Upsilon^+(\gamma).
\]
\item[(b)] If we introduce two functions $\tau^\mp:\bR_+\to\bC$ by
\[
\tau^\mp(t)=t+\Upsilon^\mp t^{3/2},
\]
then 
\[
\Im\tau^-(t)=\Im\tau^+(t)=\Upsilon t^{3/2},
\]
which is just another way of writing (a). (We have dropped the dependence on $\gamma$ here for clarity.)
\item[(c)] The statement (ii) of  Theorem \ref{thm:1} contains in fact several results. Taking the imaginary parts of \eqref{eq:mu_first_quadrant} leads immediately to 
\eqref{eq:Im_mu} by Corollary \ref{cor:diff}. The equalities obtained by taking the real parts  \eqref{eq:mu_first_quadrant} are more intricate: they indicate that, up to the terms of order $(\lambda_n^\#)^2$, the values of $\Re(\tau^-(\lambda_n^D))$ and $\Re(\tau^+(\lambda_n^N))$ coincide. 

In other words, we can construct the eigenvalues of the non-self-adjoint operator $A_\gamma$ by perturbing either the anti-symmetric self-adjoint eigenvalues $\lambda_n^D$ or the symmetric self-adjoint eigenvalues $\lambda_n^N$, and the different formulae still lead to the same result, modulo higher-order terms.

\item[(d)] The only previously known bound, see Proposition 1(g), implies only that for our potential $V_\gamma$, $|\Im\mu|<2\gamma$.
\end{itemize}
\end{remark}

The typical eigenvalue behaviour is illustrated in Figure \ref{fig:curves}.

\begin{figure}[h!]
\centerline{
\includegraphics[width=.66\linewidth]{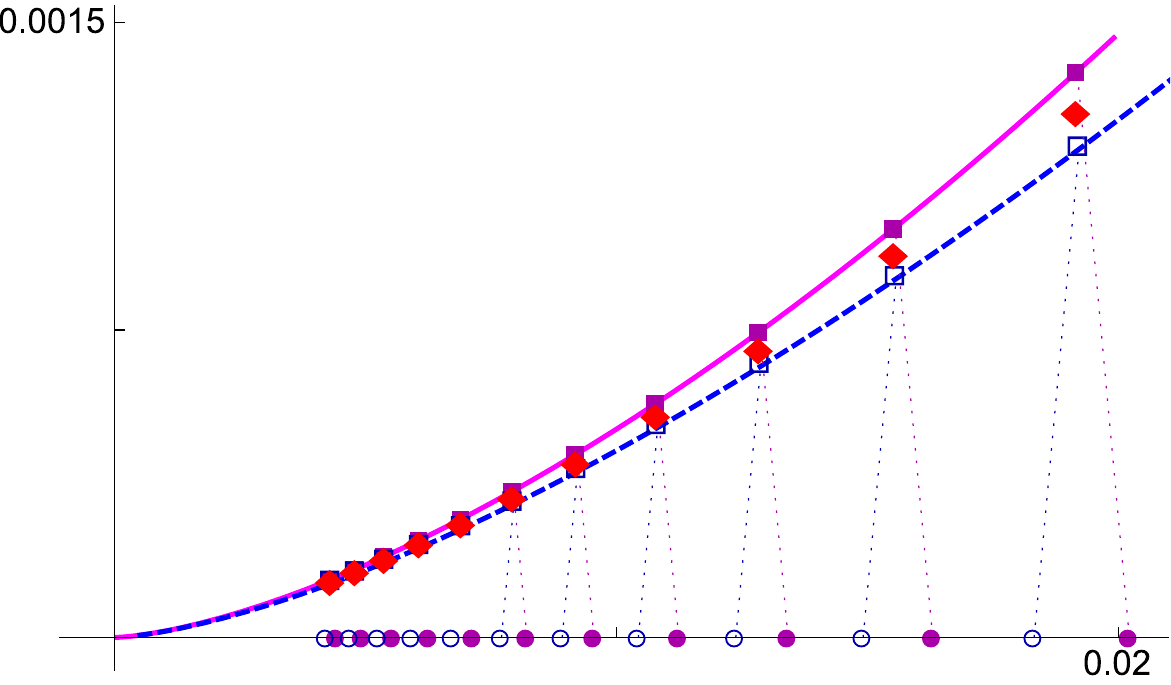}}
\caption{The red diamonds are numerically computed exact eigenvalues of $A_\gamma$, $\gamma=2.5$, lying in the first quadrant. The complex parametric curves 
$\mu=\tau^-(t)$ (the magenta solid line) and  $\mu=\tau^+(t)$ (the blue dashed line) are as in Remark \ref{rem:comments}(b). The approximated complex eigenvalues, 
computed by the first part of formula \eqref{eq:mu_first_quadrant} are shown as magenta squares, and computed by the second part of formula \eqref{eq:mu_first_quadrant} are shown as white squares. The absolute values of the eigenvalues of $T_\gamma$ are marked on the real line in the same way as in Figure \ref{fig:sa}. The dotted arrows are to indicate which real eigenvalue ``produces'' the corresponding complex ones.\label{fig:curves}}
\end{figure}

\section{Explicit form of the Jost solutions}

\subsection{Solutions of the equation on the half-line}

Let $f_\xi$ be a general solution of the differential equation 
\begin{equation}\label{eqn:SAtrafo-y}
  -\frac{\dr^2}{\dr y^2} f(y) 
  - \frac 1{y} f(y) 
= \xi f(y), \qquad y\in\bR_+.
\end{equation}

The change of variables 
\begin{equation}\label{change:y}
  y = \gamma(x+1), 
\end{equation}
relates $f_\xi$ with solutions $g_{\mu,\gamma}$ 
of the differential equation
\begin{equation}\label{eqn:SAorigK}
-\frac{\dr^2}{\dr x^2} g(x) - \frac \gamma{1+x} g(x) = \mu\; g(x), 
  \qquad x\in \bR_+,
\end{equation}
by 
\begin{equation}\label{eqn:gmu-from-flambda}
g_{\mu,\gamma}(x) = f_{\mu/\gamma^2}\left(\gamma(x+1)\right).
\end{equation}

\subsection{Explicit solutions of the differential equation \eqref{eqn:SAorigK}}

Assuming $\xi\in\overline\bC_+$, we will write
\begin{equation}\label{def:ell}
s := -\ir \sqrt{- \xi}
\end{equation}
where we  take the principal branch of $\sqrt{\;\cdot\;}$, i.e. the uniquely determined 
analytic branch that maps $\bR_+$ into itself. 
Obviously $s^2 = \xi$.

With the ansatz
\begin{equation}\label{eq:ansatz-for-tildeg}
f(y) = y \er^{-\ir s y} h(y),
\end{equation}
equation \eqref{eqn:SAtrafo-y} can be reduced to
\begin{equation}\label{eq:SAeqny1-afteransatz}
 - y \frac{\dr^2}{\dr y^2} h(y) 
 -(2-2\ir s y) \frac{\dr}{\dr y} h(y) 
 + (2\ir s -1) \, h(y)
 = 
 0.
\end{equation}
With the additional change of variables
\begin{equation}\label{change:w}
w = 2 \ir s y
\end{equation}
we arrive at a particular case of the Kummer Hypergeometric Equation 
\cite[Chapter 13.1]{nist}
\begin{equation}\label{eqn:kummerhg}
  w \frac{\dr^2}{\dr w^2} \tilde h(w) + (b-w) \frac{\dr}{\dr w} \tilde h(w) - a \tilde h(w) = 0
\end{equation}
with
\[
a = 1 - \frac1{2 \ir  s }, \qquad
b = 2, \qquad
w = 2\ir s y, \qquad
\tilde h(w) = h \left( \frac{w}{2\ir s} \right).
\]
The two linearly independent solutions of \eqref{eqn:kummerhg} are known as Kummer hypergeometric 
functions $M(a,b; w)$ and $U(a,b; w)$ so that 
\[
 \tilde h(w) = C_1 U(a,b; w)+C_2 M(a,b; w),\qquad C_1,C_2=\operatorname{const}.
 \]

Hence the solutions $f_\xi(y)$ of \eqref{eqn:SAtrafo-y} are of the form
\begin{equation}\label{sol:gtilde-ysk}
f_\xi(y) = y \er^{-\ir s y} \left( 
  C_1 \wU\left(1 - \frac1{2 \ir s }, 2;\; 2\ir s  y\right)
  + C_2 \wM\left(1 - \frac1{2\ir s }, 2;\; 2\ir s y\right)
\right).
\end{equation}

Thus, by \eqref{eqn:gmu-from-flambda}, 
the solutions of \eqref{eqn:SAorigK} are of the form
\[
\begin{split}
g_{\mu,\gamma}(x) ={}& \gamma (1+x) \er^{-\sqrt{-\mu}\, (1+x)} \Bigg( 
  C_1 \wU\left(1 - \frac \gamma{2 \sqrt{-\mu}}, 2;\; 2\sqrt{-\mu}\, (1+x)\right)\\
  &+ C_2 \wM\left(1 - \frac \gamma{2 \sqrt{-\mu}}, 2;\; 2\sqrt{-\mu}\, (1+x)\right)
\Bigg).
\end{split}
\]

\subsection{The Jost solutions of \eqref{eqn:SAorigK}}
It is well known, see e.g. \cite[13.7.1 and 13.7.2]{nist}, 
that the first order asymptotic behaviour of
the Kummer Hypergeometric Functions is given, as $|w|\to\infty$, by
\begin{align*}
\wU(a,b;w) &\sim w^{-a}, 
  &\qquad - \frac{3\pi}2 < &\arg(w) < \frac{3\pi}2, \\
\wM(a,b;w) &\sim \frac{\er^w w^{b-a}}{\Gamma(a)} + \frac{\er^{\pi \ir a} w^{-a}}{\Gamma(b-a)}, 
  &\qquad -\frac{\pi}2 \leq &\arg(w) < \frac{3\pi}2, 
  \quad a, b-a\not\in -\bN\cup\{0\},
\end{align*}
where $\Gamma(\cdot)$ stands for the usual Gamma function.

For $\mu\in\bC\setminus\bR_+$, we have $-\sqrt{-\mu} \subset \{z\in\bC \mid \Re z < 0\}$, and therefore 
\[
 \wU\left(1 - \frac \gamma{2 \sqrt{-\mu}}, 2;\; 2 y \sqrt{-\mu}\right) 
   \sim \left(2 y \sqrt{-\mu}\right)^{\frac \gamma{2 \sqrt{-\mu}} - 1},
\]
and
\[
 \wM\left(1 - \frac \gamma{2 \sqrt{-\mu}}, 2;\; 2 y \sqrt{-\mu}\right)
   \sim \frac{
       \left(2 y \sqrt{-\mu}\right)^{\frac \gamma{2 \sqrt{-\mu}} + 1}
     }{\Gamma\left(1 - \frac \gamma{2 \sqrt{-\mu}}\right)}
     \er^{2 y \sqrt{-\mu}}
\]
as $y \to \infty$. 

This in turn implies that the $f_\xi$ and $g_\mu$ defined above are $L^2(\bR_+)$ 
if and only if $C_2 = 0$.  For convenience, we choose further on the normalisation $C_1 = 1$.

The $L^2(\bR_+)$ solutions of  \eqref{eqn:SAorigK} are called the \emph{Jost solutions}. We denote them by
\begin{equation}\label{def:jost}
\varphi_\mu(\gamma,x) := \left.g_{\mu,\gamma}(x)\right|_{C_1=1, C_2=0}= \gamma (1+x) \er^{-\sqrt{-\mu}\, (1+x)}\; 
  \wU\left(1 - \frac \gamma{2 \sqrt{-\mu}}, 2;\; 
    2 (1+x) \sqrt{-\mu}\right).
\end{equation}

\section{Proof of Theorem \ref{thm:SAevs}}
\subsection{The characteristic equations for the self-adjoint problem}

It is well known that $T$ has a negative discrete spectrum accumulating to zero.
We need a more detailed knowledge of the asymptotics of its eigenvalues
and of the corresponding eigenfunctions.

It follows from the arguments of the previous sections that up to a scaling constant the eigenfunctions of the self-adjoint problem \eqref{eqn:SAorigK} 
are given, on $\bR_+$, by
\[
\psi_\lambda(\gamma,x) = \gamma (1+x) \er^{-\sqrt{\lambda}\, (1+x)}\; 
  \wU\left(1 - \frac \gamma{2 \sqrt{\lambda}}, 2;\; 
    2 (1+x) \sqrt{\lambda}\right).
\]

The eigenvalues $-\lambda_n^D$ of the self-adjoint operator $T_\gamma^D$ with Dirichlet boundary conditions at zero
are thus given by the solutions of $\psi_\lambda(\gamma,0)=0$, i.e.
\begin{equation}\label{eq:ev-dirichlet-sa}
 \frac{\gamma\, \er^{-\sqrt{\lambda}} }{2\sqrt{\lambda}} 
   \wU\left(-\frac{\gamma}{2 \sqrt{\lambda}}, 0, 2 \sqrt{\lambda}\right) = 0
\end{equation}

The eigenvalues $-\lambda_n^N$ of the self-adjoint operator $T_\gamma^N$ with Neumann boundary conditions at zero
are 
given by the solutions of $\frac{d}{dx}\psi_\lambda(\gamma,x)\Big|_{x=0} = 0$, i.e.
\begin{equation}\label{eq:ev-neumann-sa}
 \frac{\gamma\, \er^{-\sqrt{\lambda}} }{\lambda^{2}} 
 \left(
   (\gamma - 2 \sqrt{\lambda}) 
   \wU\left(-\frac{\gamma}{2 \sqrt{\lambda}}, -1, 2 \sqrt{\lambda}\right)
   + 2 \sqrt{\lambda}  (\sqrt{\lambda} - 1)
   \wU\left(-\frac{\gamma}{2 \sqrt{\lambda}}, 0, 2 \sqrt{\lambda}\right)
\right) = 0.
\end{equation}

The  solutions of transcendental equations \eqref{eq:ev-dirichlet-sa} and \eqref{eq:ev-neumann-sa} can be computed numerically, although it is a non-trivial task as the left-hand sides of these equations oscillate wildly for small $\lambda$ (cf. Figure \ref{fig:Uosc}). 
We instead use asymptotic techniques to  approximate the Kummer functions as $\lambda\to 0$ and to control their oscillations. A quick took at \eqref{eq:ev-dirichlet-sa} and \eqref{eq:ev-neumann-sa}  shows that we require asymptotic formulas, as $\lambda\to0+$, for
\begin{equation}\label{eq:UtoApproxSA}
  \wU\left(-\frac{\gamma}{2 \sqrt{\lambda }},c;\, 2 \sqrt{\lambda }\right),
  \quad c\in\{0,-1\},
\end{equation}
Unfortunately, it is a difficult task --- the corresponding formulas, are not, in fact, in the standard references. We rely, instead, on the results from the forthcoming book \cite{temme} which we  summarise and adapt in the Appendix. 

\subsection{Asymptotic solutions of a transcendental equation} A crucial element of our analysis is the investigation of the large $\kappa$-roots of the equation 
\begin{equation}\label{eq:tanG}
\tan(\gamma\kappa) =G(\kappa,\gamma)
\end{equation}
where $\gamma$ is treated as a parameter, and where $G$ depends analytically on $\kappa$ in the vicinity of $\kappa=\infty$ and, to leading order,  is of class $\cF$ as a function of $\gamma$.  The required results are summarised in the following

\begin{lemma}\label{lemma:exp}
	Let $G(\kappa,\gamma)$ be an analytic function of $\kappa$ around  $\kappa=\infty$ such that 
	\[
	G(\kappa,\gamma) = G_0(\gamma)\left(1 + O(\kappa^{-1})\right),
	\qquad\text{as}\qquad
	\kappa\to \infty,
	\] 
	$G_0\in\cF$, and the $O$ terms are regular in $\gamma$. 
	Then the solutions $\kappa_n(\gamma)$, ordered increasingly, of the equation \eqref{eq:tanG},
	are given, as $n\to\infty$,  by
\begin{equation}\label{eq:kappagamma}
	\kappa_n(\gamma) = \frac{\pi n}{\gamma} + \frac{1}{\gamma} \Theta_{G_0}(\gamma) + O(n^{-1}).
\end{equation}
\end{lemma}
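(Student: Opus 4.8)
The plan is to treat \eqref{eq:tanG} as a perturbation of its leading-order version $\tan(\gamma\kappa)=G_0(\gamma)$ and to locate its roots period by period. First I would fix $\gamma$ away from the singularities of $G_0$, so that $G_0(\gamma)$ is a finite number, and introduce the unperturbed roots $\kappa_n^{(0)}:=\bigl(\pi n+\Theta_{G_0}(\gamma)\bigr)/\gamma$. By \eqref{eq:Theta} we have $\Theta_{G_0}(\gamma)=\arctan(G_0(\gamma))+\pi Z_{G_0}(\gamma)$, so that $\tan(\gamma\kappa_n^{(0)})=\tan(\Theta_{G_0}(\gamma))=G_0(\gamma)$ exactly; in particular $\gamma\kappa_n^{(0)}$ stays at a fixed distance from the poles $(m+\tfrac12)\pi$ of $\tan$, which is what keeps the later estimates uniform.

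Second, I would establish existence, uniqueness and correct indexing of the roots. On each interval on which $\gamma\kappa$ sweeps one full period $\bigl((m-\tfrac12)\pi,(m+\tfrac12)\pi\bigr)$ of the tangent, the left-hand side of \eqref{eq:tanG} increases strictly monotonically from $-\infty$ to $+\infty$, whereas the right-hand side is analytic and, from its expansion at infinity, satisfies $G(\kappa,\gamma)=G_0(\gamma)+O(\kappa^{-1})$ and $\partial_\kappa G=O(\kappa^{-2})$, so it is bounded and nearly constant on that interval for $m$ large. The intermediate value theorem together with monotonicity then yields exactly one root of \eqref{eq:tanG} in each such interval for all sufficiently large $m$, and ordering these roots increasingly matches them with the integer label $n$ up to a fixed shift absorbed into $Z_{G_0}(\gamma)$.

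Third, I would quantify the correction by a Newton/mean-value estimate. Setting $H(\kappa):=\tan(\gamma\kappa)-G(\kappa,\gamma)$, the exact identity $\tan(\gamma\kappa_n^{(0)})=G_0(\gamma)$ gives $H(\kappa_n^{(0)})=G_0(\gamma)-G(\kappa_n^{(0)},\gamma)=O\bigl((\kappa_n^{(0)})^{-1}\bigr)=O(n^{-1})$, while $H'(\kappa)=\gamma\bigl(1+\tan^2(\gamma\kappa)\bigr)-\partial_\kappa G(\kappa,\gamma)$ is bounded below by $\gamma(1+G_0(\gamma)^2)+o(1)>0$ in a fixed neighbourhood of $\kappa_n^{(0)}$. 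Dividing gives $\kappa_n(\gamma)-\kappa_n^{(0)}=-H(\kappa_n^{(0)})/H'(\xi_n)=O(n^{-1})$, which is exactly \eqref{eq:kappagamma}.

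The step I expect to be the main obstacle is the bookkeeping that produces the continuous branch $\Theta_{G_0}$ rather than the naive $\arctan(G_0)$. For a single fixed $\gamma$ the two differ only by the integer multiple $\pi Z_{G_0}(\gamma)$ of $\pi$, i.e. by a relabelling of $n$; the point of $\Theta_{G_0}$ is that it makes the labelling consistent and continuous as $\gamma$ varies, which is what the applications of the lemma require. Handling this forces one to track how the roots cross the poles of $\tan$ as $\gamma$ passes through a singularity of $G_0$ (where $G_0\to\pm\infty$ with a definite sign, by the definition of $\cF$), and to verify that the winding term $Z_{G_0}(\gamma)$ in \eqref{eq:Z} accounts precisely for the number of such crossings. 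Making the $O(n^{-1})$ uniform in a neighbourhood of such $\gamma$ is the only genuinely delicate estimate.
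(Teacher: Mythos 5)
Your proposal is correct and is essentially the argument the paper has in mind: the authors declare the lemma ``immediate'' from Definition \ref{defn:Theta} and the $\pi$-periodicity of $\tan$, and your write-up simply supplies the standard details (exact unperturbed roots $\kappa_n^{(0)}=(\pi n+\Theta_{G_0}(\gamma))/\gamma$, one root per period by monotonicity, and a Newton/mean-value step giving the $O(n^{-1})$ correction). Your closing remark correctly identifies the only real content beyond this, namely that the continuous branch $\Theta_{G_0}$ fixes the labelling consistently as $\gamma$ crosses singularities of $G_0$.
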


The proof of Lemma  \ref{lemma:exp} is in fact immediate as soon as we recall Definition \ref{defn:Theta} of $\Theta$ and the fact that $\tan$ is $\pi$-periodic.

Considering additional terms in the expansion of $G$ one can get additional terms in the expansion of $\kappa_n$. This is in fact what we do in more detail in Section \ref{sec:det}. 

\subsection{Approximation of Dirichlet eigenvalues}

We can use the asymptotic approximation obtained in \eqref{eq:approxUexp} to reduce \eqref{eq:ev-dirichlet-sa} to the simpler form
\begin{equation}
\label{eq:ev-d-sa-appr-0}
  \cos\left(\frac{\gamma\pi}{2\sqrt\lambda}\right) \left(J_1\left(2\sqrt{\gamma}\right) + O(\lambda)\right)
+ \sin\left(\frac{\gamma\pi}{2\sqrt\lambda}\right) \left(Y_1\left(2\sqrt{\gamma}\right) + O(\lambda)\right)
= 0.
\end{equation}
This in turn can be rewritten as
\begin{equation}\label{eq:ev-d-sa-appr}
\tan\left(\frac{\gamma\pi}{2\sqrt\lambda}\right)
= 
- \frac{J_1\left(2\sqrt{\gamma}\right)}{Y_1\left(2\sqrt{\gamma}\right)} +O(\lambda).
\end{equation}
Applying Lemma \ref{lemma:exp} with
\[
\kappa=\frac{1}{2\sqrt\lambda},
\qquad G_0(\gamma)=- \frac{J_1\left(2\sqrt{\gamma}\right)}{Y_1\left(2\sqrt{\gamma}\right)} 
  = - R_1(\gamma),
\]
we obtain, after a minor effort,
\begin{align}
\lambda_n &= \frac{\gamma^2 \pi^2}4
  \left( n\pi - \Theta_{G_0}(\gamma) \right)^{-2} + O(n^{-4}) \NN \\
&= \frac{\gamma^2}{4 n^2} \left( 1 + \frac2{\pi n} \Theta_{G_0}(\gamma) + O(n^{-2}) \right) \\
&= \frac{\gamma^2}{4 n^2} \left( 1 - \frac2{\pi n} \Theta_{R_1}(\gamma) + O(n^{-2}) \right)
\label{eq:ln-d-appr}
\end{align}
as $n \to +\infty$, thus proving the first part of Theorem \ref{thm:SAevs}.

\subsection{Approximation of Neumann eigenvalues}

The analysis for Neumann eigenvalues is  slightly more complicated. Again we can use \eqref{eq:approxUexp} to reduce \eqref{eq:ev-neumann-sa} to
\begin{equation}
\label{eq:ev-n-as-approx}
\tan\left(\frac{\gamma\pi}{2\sqrt\lambda}\right)
= -\frac{
  P(\gamma,\lambda)
}{
  Q(\gamma,\lambda)
}
\end{equation}
where
\begin{align*}
P(\gamma,\lambda) :={}& \sqrt{\gamma} (5 \sqrt{\lambda} - 8)(\sqrt{\lambda}+1) J_1\left(2\sqrt{\gamma}\right)
  + (11\sqrt{\lambda} - 8\gamma)(2\sqrt{\lambda}-\gamma) J_2\left(2\sqrt{\gamma}\right) \\
  &-8\sqrt{\gamma}\sqrt{\lambda}(2\sqrt{\lambda}-\gamma) J_3\left(2\sqrt{\gamma}\right)+O(\lambda^{3/2}), \\
Q(\gamma,\lambda) :={}& \sqrt{\gamma} (5 \sqrt{\lambda} - 8)(\sqrt{\lambda}+1) Y_1\left(2\sqrt{\gamma}\right)
  + (11\sqrt{\lambda} - 8\gamma)(2\sqrt{\lambda}-\gamma) Y_2\left(2\sqrt{\gamma}\right) \\
  &-8\sqrt{\gamma}\sqrt{\lambda}(2\sqrt{\lambda}-\gamma) Y_3\left(2\sqrt{\gamma}\right)+O(\lambda^{3/2}).
\end{align*}

Applying once again Lemma \ref{lemma:exp} with
\[
\kappa=\frac{1}{2\sqrt\lambda},
\qquad G_0(\gamma)=-\frac{P(\gamma,0)}{Q(\gamma,0)}=- \frac{J_0\left(2\sqrt{\gamma}\right)}{Y_0\left(2\sqrt{\gamma}\right)} = -R_0(\gamma),
\]
we quickly arrive at
\begin{equation}\label{eq:ln-n-appr}
\lambda_n = 
\frac{\gamma^2}{4 n^2} \left( 1 + \frac2{\pi n} 
\Theta_{G_0}(\gamma)
+O(n^{-2})
\right) 
= \frac{\gamma^2}{4 n^2} \left( 1 - \frac2{\pi n} 
\Theta_{R_0}(\gamma)
+O(n^{-2})
\right) 
\end{equation}
as $n\to\infty$, thus proving the second part of Theorem \ref{thm:SAevs}.

\section{Proof of the asymptotic results of the non-self-adjoint operator}
\subsection{Eigenvalues and the Jost solutions}

\begin{lemma}
	The eigenvalues of \eqref{def:Agamma} are the zeroes of the determinant
\begin{equation}\label{eq:dt}
  M(\mu)=M_\gamma(\mu) = \phi'_{\mu}(\gamma,0)\, \phi_{-\mu}(\gamma,0) 
    + \phi'_{-\mu}(\gamma,0)\, \phi_{\mu}(\gamma,0).
\end{equation}
\end{lemma}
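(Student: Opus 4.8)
The plan is to characterise $\mu$ as an eigenvalue of $A_\gamma$ by reducing the eigenvalue equation to two half-line problems glued at the origin, and then to extract a $2\times 2$ determinant whose vanishing is equivalent to the existence of a nontrivial $L^2$ solution. Since $A_\gamma = J T_\gamma$ with $J=\sgn(x)$ and $J^2=\Id$, the equation $A_\gamma u = \mu u$ is equivalent to $T_\gamma u = \mu J u$. On $\bR_+$ this reads $T_\gamma u = \mu u$, while on $\bR_-$ it reads $T_\gamma u = -\mu u$. Thus the restrictions of $u$ to the two half-lines each solve a Schr\"odinger equation with the shifted Coulomb potential, but with spectral parameters $\mu$ and $-\mu$ respectively.

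First I would identify the admissible solutions on each half-line. A function $u \in \Dom(T_\gamma) \subset L^2(\bR)$ must be square-integrable on each half-line, so on $\bR_+$ it must be proportional to the unique (up to scalars) $L^2(\bR_+)$ solution, namely the Jost solution: $u(x) = c_+ \phi_\mu(\gamma,x)$ for $x>0$. On $\bR_-$ I would exploit the evenness of $V_\gamma$: setting $\tilde x = -x$ and using $V_\gamma(-\tilde x) = V_\gamma(\tilde x)$, the restriction $v(\tilde x) := u(-\tilde x)$ solves the same equation on $\bR_+$ but with parameter $-\mu$, so $u(x) = c_- \phi_{-\mu}(\gamma,-x)$ for $x<0$. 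For non-real $\mu$ both $\mu$ and $-\mu$ lie in $\bC\setminus\bR_+$, so both Jost solutions are genuinely the subdominant $L^2$ solutions and this representation is forced.

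Next I would impose the gluing conditions at the origin. Because $\Dom(T_\gamma)$ requires $u, u' \in AC(\bR)$, both $u$ and $u'$ must be continuous at $0$. Reading off the boundary values from the two representations --- and noting the chain-rule sign flip $\frac{\dr}{\dr x}\phi_{-\mu}(\gamma,-x)\big|_{x=0} = -\phi'_{-\mu}(\gamma,0)$ coming from the reflection --- continuity of $u$ and of $u'$ yields the homogeneous linear system
\[
c_+ \phi_\mu(\gamma,0) = c_- \phi_{-\mu}(\gamma,0), \qquad c_+ \phi'_\mu(\gamma,0) = -\,c_- \phi'_{-\mu}(\gamma,0)
\]
for the coefficients $(c_+,c_-)$. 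A nontrivial solution exists precisely when the determinant of this system vanishes, and expanding it gives exactly $\phi_\mu(\gamma,0)\,\phi'_{-\mu}(\gamma,0) + \phi_{-\mu}(\gamma,0)\,\phi'_\mu(\gamma,0) = M(\mu)$.

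The computation itself is short, so the only points requiring care --- and the mild obstacle --- are the bookkeeping of the reflection (the sign flip in the derivative is precisely what turns a Wronskian-type expression into the \emph{symmetric} sum defining $M$) and the verification that the condition is a genuine equivalence. For the converse direction I would check that whenever $M(\mu)=0$ the glued function $u$ really lies in $\Dom(T_\gamma)$: it is $L^2$ on each half-line by the choice of Jost solutions, it is $C^1$ across $0$ by the matching, and $T_\gamma u \in L^2$ since $u$ solves the (regular near $0$) ODE on each side with matched Cauchy data. I would also record that this characterisation is valid for $\mu \in \bC \setminus \bR$, so that both Jost solutions are simultaneously defined and square-integrable --- which is exactly the region containing the complex eigenvalues we are after.
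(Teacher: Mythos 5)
Your proposal is correct and follows essentially the same route as the paper: restrict the eigenvalue equation to the two half-lines (with spectral parameters $\mu$ and $-\mu$), represent each restriction by the Jost solution, impose $C^1$ matching at the origin, and read off the vanishing of the resulting $2\times2$ determinant, including the correct sign flip in the derivative from the reflection. The extra care you take with the converse direction and with the remark that both Jost solutions are simultaneously $L^2$ only for non-real $\mu$ is consistent with (and slightly more explicit than) the paper's own argument.
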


\begin{proof}
Suppose that $\mu\in\bC$ is an eigenvalue of $A_\gamma$, and that $g_\mu(x)\in L^2(\bR)$ is a corresponding eigenfunction. Then $g_\mu$ solves the differential equation
\[
-\frac{\dr^2}{\dr x^2}g_\mu(x)-\frac{\gamma}{1+|x|}g_\mu(x)=\sgn(x)\mu g_\mu(x).
\]
If $g_\pm$ denote the restrictions of $g_\mu$ on $\bR_+$ and $\bR_-$, then by integrability we must have
\[
g_+(x)=C_+ \phi_{\mu}(\gamma,x),\qquad g_-(-x)=C_-\phi_{-\mu}(\gamma,x),\qquad x\in\bR_+,
\]
where $\phi_{\mu}(\gamma,x)$ is the Jost solution \eqref{def:jost}. 

As an eigenfunction should be continuously differentiable at zero, we obtain
\[
\begin{cases}
C_+\phi_{\mu}(\gamma,0)-C_-\phi_{-\mu}(\gamma,0)&=0,\\
C_+\phi'_{\mu}(\gamma,0)+C_-\phi'_{-\mu}(\gamma,0)&=0,
\end{cases}
\]
which has a non-trivial solution if and only if $M_\gamma(\mu)=0$.
\end{proof}

\begin{remark}\begin{itemize}
	\item[{\rm (a)}] It follows from \eqref{def:jost} that if $\mu$ is real, either $\phi_\mu$ or $\phi_{-\mu}$ is not square integrable, and therefore $A_\gamma$ cannot have real eigenvalues.
	\item[{\rm (b)}] By \cite[Proposition 4.6]{behrndttrunk}  one can instead look for the eigenvalues of \eqref{def:op} as the zeroes of the m-function
\begin{equation}\label{eq:m}
  m_\gamma(\mu) = \frac{\phi'_{\mu}(\gamma,0)}{\phi_{\mu}(\gamma,0)} + \frac{\phi'_{-\mu}(\gamma,0)}{\phi_{-\mu}(\gamma,0)}.
\end{equation}
\tcR{The use of half-line m-functions is natural and has been already suggested elsewhere, and described in great generality for indefinite Sturm-Liovuille problems with turning point at $0$ in \cite{katr} (see also references therein).}
\item[{\rm (c)}] In what follows we assume that $\mu$ is in the upper half plane $\bC_+$ and look for the eigenvalues on the first quadrant. The final result will follow by symmetry (see Proposition \ref{prop:one}(a) and Proposition \ref{prop:one}(f)).
\end{itemize}
\end{remark}

\subsection{The determinant}\label{sec:det}

We can use \eqref{def:jost} and the known relations \cite[\S 13.3]{nist} between Kummer hypergeometric functions 
to rewrite \eqref{eq:dt} as
\begin{equation}\label{eq:dt1}\begin{split}
M(\mu) ={}& \frac{ 
  \gamma^2 \sqrt{-\mu}\; \er^{-\sqrt{-\mu}-\sqrt{\mu}} 
}{
  8\, \mu^{5/2}
}
\Bigg[ 
  \left(
    \gamma \sqrt{-\mu } + 2 \mu
  \right) 
  \wU\Big(-\frac{\gamma}{2 \sqrt{-\mu }},-1;\, 2 \sqrt{-\mu }\Big) 
  \wU\Big(-\frac{\gamma}{2 \sqrt{\mu }},0;\, 2 \sqrt{\mu }\Big) \\
  & + \left(
    2 \mu - \gamma \sqrt{\mu }
  \right)
  \wU\Big(-\frac{\gamma}{2 \sqrt{-\mu }},0;\, 2 \sqrt{-\mu }\Big)
  \wU\Big(-\frac{\gamma}{2 \sqrt{\mu }},-1;\, 2 \sqrt{\mu }\Big) \\
  & + 2 \mu \left(
    \sqrt{-\mu } + \sqrt{\mu } - 2
  \right)  
  \wU\Big(-\frac{\gamma}{2 \sqrt{-\mu }},0;\, 2 \sqrt{-\mu }\Big)
  \wU\Big(-\frac{\gamma}{2 \sqrt{\mu }},0;\, 2 \sqrt{\mu }\Big)
\Bigg].
\end{split}
\end{equation}

To find approximate solutions of $M(\mu) = 0$, we use the asymptotic formula \eqref{eq:approxUexp}.

Let us define for brevity
\begin{align*}
j_\nu &:= J_{\nu}(2\sqrt{\gamma}), \quad&
y_\nu &:= Y_{\nu}(2\sqrt{\gamma}), \\
K_+ &:= \cos \left(\frac{\gamma\pi}{2 \sqrt{\mu }}\right), \quad& K_- &:= \cosh \left(\frac{\gamma\pi}{2 \sqrt{\mu}}\right),\\
S_+ &:= \sin \left(\frac{\gamma\pi}{2 \sqrt{\mu }}\right),\quad& 
S_- &:= \sinh \left(\frac{\gamma\pi}{2 \sqrt{\mu }}\right), \\
T_+ &:= \tan \left(\frac{\gamma\pi}{2 \sqrt{\mu }}\right), \quad&
T_- &:= \tanh \left(\frac{\gamma\pi}{2 \sqrt{\mu }}\right).
\end{align*}

With these abbreviations and with the use of asymptotic formulae \eqref{eq:approxUexp}, equation \eqref{eq:dt} becomes
\begin{equation}\label{eq:dt2}
\begin{split}
\frac {\ir \sqrt{\gamma}}{\sqrt{\mu}}\, &
\Gamma\left(\frac{\ir \gamma}{2 \sqrt{\mu }}+1\right) 
\Gamma\left(\frac{\gamma}{2 \sqrt{\mu }}+1\right)
\Bigg\{
  \left( 1 - \frac{5 \sqrt{\mu }}{8 \gamma} \right) 
  \left( 2 \mu - \ir \gamma \sqrt{\mu } \right) 
  \left( j_1 K_+ + y_1 K_- \right) \\
&\quad\cdot 
  \left[
    \left( 1 + \frac{11 \ir \sqrt{\mu }}{8 \gamma}\right) 
    \left( j_2 S_+ + \ir y_2 S_- \right)
    + \frac{\sqrt{\mu } \left( y_3 S_- - \ir j_3 S_+ \right)}{\sqrt{\gamma}}
  \right] \\
& + \ir \left( 1 + \frac{5 \ir \sqrt{\mu }}{8 \gamma} \right)
  \left( j_1 S_+ + \ir y_1 S_- \right) 
  \Bigg[
    \left(2 \mu - \gamma \sqrt{\mu} \right) \\
&\quad\cdot 
    \left(
      \left( 1 - \frac{11 \sqrt{\mu }}{8 \gamma} \right) 
      \left( j_2 K_+ + y_2 K_- \right)
      + \frac{\sqrt{\mu } \left( j_3 K_+ + y_3 K_- \right)}{\sqrt{\gamma}}
    \right) \\
&\quad
    - \frac{\left(\frac{1}{8}-\frac{\ir}{8}\right) 
      \left(\sqrt{\mu }-(1+\ir)\right) \sqrt{\mu } 
      \left(8 \gamma-5 \sqrt{\mu }\right) 
      \left(j_1 K_+ + y_1 K_-\right)}{\sqrt{\gamma}}
  \Bigg] 
\Bigg\} = 0,
\end{split}
\end{equation}
where we have dropped the lower order terms.

Simplifying, writing $S_\pm = T_\pm K_\pm$, and collecting terms in $K_\pm$, we get

\begin{equation}\label{eq:dt3}
\begin{split}
&K_+ \Big\{
  (1+\ir) j_1^2 \sqrt{\gamma} \left(
    \sqrt{\mu } - (1+\ir)
  \right) \left(
    64 \ir \gamma^2 - 40 (1 + \ir) \gamma \sqrt{\mu } + 25 \mu 
  \right) \\
&\quad
  + T_- y_1 \left(
    -8 \ir \gamma^2 + (5+16 \ir) \gamma \sqrt{\mu } - 10 \mu
  \right) \left(
    j_2 \left(
      8 \gamma-11 \sqrt{\mu }
    \right) + 8 j_3 \sqrt{\gamma} \sqrt{\mu }
  \right) \\
&\quad 
  - j_1 \Big[
    -16 \ir j_3 \sqrt{\gamma} \sqrt{\mu } \left(
      4 (1 + \ir) \gamma^2-5 \gamma \sqrt{\mu } + 5 (1 - \ir) \mu 
    \right) \\
&\quad\quad
    + 2 j_2 \left(
      64 \gamma^3 - 128 (1 - \ir) \gamma^2 \sqrt{\mu } - 135 \ir \gamma \mu + 55 (1 + \ir) \mu^{3/2}
    \right) \\
&\quad\quad
    + T_- \left(
      8 \gamma-5 \sqrt{\mu }
    \right) \Big(
      (1+\ir) \sqrt{\gamma} \left(
        \sqrt{\mu } - (1+\ir)
      \right) \left(
        8 \gamma + 5 \ir \sqrt{\mu }
      \right) y_1 \\
&\quad\quad\quad      
      + \left(
        \gamma + 2 \ir \sqrt{\mu }
      \right) \left(
        8 \sqrt{\gamma} \sqrt{\mu } y_3 + y_2 \left(
          -11 \sqrt{\mu }+8 \ir \gamma
        \right)
      \right)
    \Big)
  \Big]
\Big\} \\
&- K_- \Big\{
  y_1 \Big[
    8 j_3 \sqrt{\gamma}\sqrt{\mu } \left(
      -8 \ir \gamma^2 + (16+5 \ir) \gamma \sqrt{\mu } - 10 \mu 
    \right) \\
&\quad
    + j_2 \left(
      64 \gamma^3-(40-216 \ir) \gamma^2 \sqrt{\mu } - (176+135 \ir) \gamma \mu + 110 \mu^{3/2}
    \right) \\
&\quad
    + (1+\ir) T_- \Big(
      \sqrt{\gamma} \left(
        \sqrt{\mu } - (1+\ir)
      \right) \left(
        64 \gamma^2 - 40 (1 - \ir) \gamma \sqrt{\mu } - 25 \ir \mu 
      \right) y_1 \\
&\quad\quad
      + 8 (1 - \ir) \sqrt{\gamma} \sqrt{\mu } y_3 \left(
        4 (1 + \ir) \gamma^2 - 5 \gamma \sqrt{\mu } + 5 (1 - \ir) \mu 
      \right) \\
&\quad\quad
      + y_2 \left(
        64 (1 + \ir) \gamma^3-256 \gamma^2 \sqrt{\mu }+135 (1 - \ir) \gamma \mu + 110 \ir \mu^{3/2}
      \right)
    \Big)
  \Big] \\
&\quad 
  + j_1 \left(
    8 \gamma+5 \ir \sqrt{\mu }
  \right) \Big(
    \left(
      k-2 \sqrt{\mu }
    \right) \left(
      y_2 \left(
        8 \gamma-11 \sqrt{\mu }
      \right) + 8 \sqrt{\gamma} \sqrt{\mu } y_3
    \right) \\
&\quad 
    + (1-\ir) \sqrt{\gamma} \left(
      \sqrt{\mu } - (1+\ir)
    \right) \left(
      8 \gamma-5 \sqrt{\mu }
    \right) y_1
  \Big)
\Big\} = 0.
\end{split}
\end{equation}

In what follows, we essentially  replicate the reasoning in Lemma \ref{lemma:exp}, but working to a higher accuracy. Introduce in the equation \eqref{eq:dt3} the ansatz
\begin{equation}\label{eq:ansatz}
\mu = \lambda + \nu \lambda^{3/2} + \eta \lambda^2,
\end{equation}
where $\lambda$ is, as before, the absolute value of an eigenvalue of the 
self-adjoint operator with either Dirichlet or Neumann boundary conditions. Now replace
back $K_-$, $T_-$ and $K_+$ with the corresponding expressions. The next step --- expanding the left-hand side
of the resulting equation in the Taylor series with respect to $\lambda$ 
around zero, --- is delicate.

First of all, observe that as $\lambda\to0$
\begin{align*}
\cos\left(\frac{\gamma\pi}{2\sqrt{\lambda + \nu \lambda^{3/2} + \eta \lambda^2}}\right)
={}& \cos\left(\frac{\gamma\pi}{2\sqrt{\lambda}} \left(  
  1 - \left(\frac{\nu \sqrt{\lambda}}2 + \left( \frac \eta2 - \frac{3\nu^2}8\right) \lambda + O(\lambda^{3/2}) \right)
\right)\right) \\
={}& \cos\left( \frac{\gamma\pi}{2\sqrt{\lambda}} \right)
     \cos\left(\frac{\gamma\pi}{4} \left( \nu + \left( \eta - \frac{3\nu^2}4 \right) \sqrt{\lambda} \right) \right) \\
     &+ \sin\left( \frac{\gamma\pi}{2\sqrt{\lambda}} \right)
     \sin\left(\frac{\gamma\pi}{4} \left( \nu + \left( \eta - \frac{3\nu^2}4 \right) \sqrt{\lambda} \right) \right) + O(\lambda),
\end{align*}
and similarly
\begin{align*}
\sin\left(\frac{\gamma\pi}{2\sqrt{\lambda + \nu \lambda^{3/2} + \eta \lambda^2}}\right)
={}& \sin\left( \frac{\gamma\pi}{2\sqrt{\lambda}} \right)
     \cos\left(\frac{\gamma\pi}{4} \left( \nu + \left( \eta - \frac{3\nu^2}4 \right) \sqrt{\lambda} \right) \right) \\
     &- \cos\left( \frac{\gamma\pi}{2\sqrt{\lambda}} \right)
     \sin\left(\frac{\gamma\pi}{4} \left( \nu + \left( \eta - \frac{3\nu^2}4 \right) \sqrt{\lambda} \right) \right) + O(\lambda).
\end{align*}

We want to derive  a similar expansion  for 
$\tanh\left(\frac{\gamma\pi}{2\sqrt{\lambda + \nu \lambda^{3/2} + \eta \lambda^2}}\right)$.
We use 
\[
\tanh(t_1 - t_2) = 
  \frac{\sinh(t_2) + \cosh(t_2)\tanh(t_1)}{\cosh(t_2) - \sinh(t_2)\tanh(t_1)}
\]
with $t_1 := \frac{\gamma\pi}{2\sqrt{\lambda}}$ and 
$t_2 := \frac{\gamma\pi}{4} \left( \nu + \left( \eta - \frac{3\nu^2}4 \right) \sqrt{\lambda} + O(\lambda) \right)$.

As
$\tanh(\pi \gamma/2\sqrt{\lambda}) = 1$ for $\lambda\to0$
modulo exponentially small terms,  we get 
(again up to exponentially small errors)
\[
\tanh\left(\frac{\gamma\pi}{2\sqrt{\lambda + \nu \lambda^{3/2} + \eta \lambda^2}}\right)
= \frac{\sinh(t_2) + \cosh(t_2)}{\cosh(t_2) - \sinh(t_2)} = -1,
\]
and \eqref{eq:dt3} reduces to an equation involving only 
$\tan\left( \frac{\gamma\pi}{2\sqrt{\lambda}} \right) $, 
$\tan\left(\frac{\gamma\pi}{4} \left( \nu + \left( \eta - \frac{3\nu^2}4 \right) \sqrt{\lambda} \right) \right)$ 
and powers of $\sqrt{\lambda}$. This is still, however, very hard to control.

\subsection{Complex eigenvalue curves}

We can now use our knowledge of the self-adjoint problem (see \eqref{eq:ev-d-sa-appr} and \eqref{eq:ev-n-as-approx}) to simplify \eqref{eq:dt3} further. 
For definiteness, suppose that $-\lambda$ is an eigenvalue of $T^D$.

Using the approximate identity \eqref{eq:ev-d-sa-appr}, obtained for the eigenvalues of the Dirichlet self-adjoint problem on the half line, we can reduce the already simplified \eqref{eq:dt3} to an equation involving only $\tan\left(\frac{\gamma\pi}{4} \left( \nu + \left( \eta - \frac{3\nu^2}4 \right) \sqrt{\lambda} \right) \right)$ and powers of $\sqrt{\lambda}$. Collecting the tangent terms, after some cumbersome but straightforward simplifications we arrive at
\begin{equation}\label{eq:withtan}
\tan\left(\frac{\gamma\pi}{4} \left( \nu + \left( \eta - \frac{3\nu^2}4 \right) \sqrt{\lambda} \right) \right)
= \frac{\tilde P(\lambda, \gamma)}{\tilde Q(\lambda,  \gamma)}
\end{equation}
where
\begin{align*}
\tilde P(\lambda, \gamma) &= -(4+4 \ir) \gamma + (12+7 \ir) \sqrt{\lambda }, \\
\begin{split}
\tilde Q(\lambda, \gamma) &= i(4+4 \ir) \gamma + (7+12 \ir) \sqrt{\lambda } \\ 
&\quad+2 \pi  \left( 4 \sqrt{\gamma } \left((1+\ir)\gamma -3 \sqrt{\lambda }\right) 
    \left( j_0 j_1 + y_0 y_1\right) 
    + 7 \sqrt{\lambda } \left( j_1^2 + y_1^2 \right) \right),
\end{split}
\end{align*}
and we have dropped terms of order $O(\lambda)$.  We have used the following standard relations \cite[Ch. 13]{nist} for the Bessel functions,
\begin{align*}
& J_{n+1}(2\sqrt{\gamma}) Y_n(2\sqrt{\gamma}) - J_n(2\sqrt{\gamma}) Y_{n+1}(2\sqrt{\gamma}) = \frac1{\pi \sqrt{\gamma}}, \\
& J_1(2\sqrt{\gamma}) - \sqrt{\gamma} J_2(2\sqrt{\gamma}) = \sqrt{\gamma} J_0(2\sqrt{\gamma}),\\
& Y_1(2\sqrt{\gamma}) - \sqrt{\gamma} Y_2(2\sqrt{\gamma}) = \sqrt{\gamma} Y_0(2\sqrt{\gamma}),
\end{align*}
in the simplifications.

Note that right-hand side of \eqref{eq:withtan} does not depend 
on $\nu$. We can now invert the tangent and solve with respect to  $\nu$, to find the coefficient for 
the $\lambda^{3/2}$ term in \eqref{eq:ansatz}. Expanding in $\lambda$ around $0$
and taking the leading term we get 
\begin{equation}\label{eq:CD}
\nu =: \Upsilon^-(\gamma) = \frac4{\pi \gamma}\arctan\left(
  \frac1{\ir-2\pi\sqrt{\gamma}\left(
    J_0(2\sqrt{\gamma})\, J_1(2\sqrt{\gamma}) + Y_0(2\sqrt{\gamma})\,Y_1(2\sqrt{\gamma})
  \right)}
\right).
\end{equation}
The contribution of the other terms in the expansion then forms a part of $\eta$, which we drop.

\begin{remark}
We can repeat the same procedure using the relation \eqref{eq:ev-n-as-approx} 
for the absolute value of a Neumann eigenvalue $\lambda$ 
as a starting point. In this case we obtain
\begin{equation}\label{eq:CN}
\nu =: \Upsilon^+(\gamma) =  \nu = \frac4{\pi \gamma}\arctan\left(
  \frac1{\ir+2\pi\sqrt{\gamma}\left( 
    J_0(2\sqrt{\gamma})\, J_1(2\sqrt{\gamma}) + Y_0(2\sqrt{\gamma})\,Y_1(2\sqrt{\gamma})
  \right)}
\right).
\end{equation}
\end{remark}

\begin{remark}
One can use standard asymptotic formulas for Bessel functions 
(see \cite[Chapter 10]{nist}) to observe 
\[
q(\gamma) \sim \begin{cases}
-\frac{\log(\gamma)}\pi & \mbox{if } \gamma\to 0 \\
\frac{1}{4 \sqrt{\gamma}}-\frac{3 \cos \left(4 \sqrt{\gamma}\right)}{512 \gamma} & \mbox{if } \gamma\to \infty
\end{cases}.
\]
Moreover, again using standard estimates and properties of Bessel functions and
their zeroes one can observe that
\[
|J_0(x)|,|J_1(x)|,|Y_0(x)|,|Y_1(x)| < \frac1{\sqrt{2}} \mbox{ for } x \ge 2,
\]
and the four functions are monotone for $x<2$ ($J_0$ increasing and bounded by $1$, 
the other ones decreasing, in particular the $Y$ are unbounded). In particular,
for any $\gamma\in(0,1)$ neither $\Upsilon^-(\gamma)$ and $\Upsilon^+(\gamma)$ nor 
their real and imaginary parts, vanish. Moreover $\Upsilon^\mp(\gamma)$ define curves in the complex
plane that diverge as $\gamma\to0$ and converge to $0$ as $\gamma\to+\infty$. 
\end{remark}

With $q(\gamma)$ as in \eqref{eq:q} we get  \eqref{eq:Ups^mp}.
Then the identity $\Im\Upsilon^-(\gamma) = \Im\Upsilon^+(\gamma)$ follows immediately using the standard relation between $\arctan$ and $\log$ (see Remark \ref{rem:comments}(a)), thus proving part (i) of Theorem \ref{thm:1}.

Proving part (ii) of Theorem \ref{thm:1} requires some extra work. 
First of all observe that, up to the errors of order $O(n^{-4})$, we have
\begin{align*}
\lambda^D_n - \lambda^N_n &= -\frac{\gamma^2}{2\pi n^3} 
  \left( 
    \Theta_{R_1}(\gamma) 
    - \Theta_{R_0}(\gamma) 
  \right) \\
  &=-\frac{\gamma^2}{4\pi n^3} \left( \ir\log\left(
      \frac{(J_1(2\sqrt{\gamma}) - \ir Y_1(2\sqrt{\gamma}))
        (J_0(2\sqrt{\gamma}) + \ir Y_0(2\sqrt{\gamma}))}
       {(J_1(2\sqrt{\gamma}) + \ir Y_1(2\sqrt{\gamma}))
        (J_0(2\sqrt{\gamma}) - \ir Y_0(2\sqrt{\gamma}))}\right)
    \right)\\
  &=-\frac{\gamma^2}{4\pi n^3} \left(
    \ir \log\left(\frac{q(\gamma) + \ir}{q(\gamma) - \ir}\right)
    \right),
\end{align*}
where $R_k(\gamma)$ are defined in Theorem \ref{thm:SAevs} and 
the first identity follows from 
the fact that zeroes of $Y_0$ and $Y_1$ are interlaced.
Similarly
\begin{align*}
\Upsilon^-(\gamma) - \Upsilon^+(\gamma) = 
  \frac{2\ir}{\pi\gamma}\log\left(\frac{2q(\gamma) - 2\ir}{2q(\gamma) + 2\ir}\right).
\end{align*}
Despite the appearance of the complex unity $\ir$ in the above formulae, all these expressions are in fact real!

To show that the two asymptotic formulae \eqref{eq:mu_first_quadrant} for $\mu_n$ coincide up to the lower order terms, we use Theorem \ref{thm:SAevs}, to obtain, as $n\to \infty$,
\[
\begin{split}
 &\left(\lambda^D_n + \Upsilon^-_n(\gamma)(\lambda^D_n)^{3/2}\right)  - \left(\lambda^N_n+\Upsilon^+_n(\gamma)(\lambda^N_n)^{3/2}\right) + O \left( \frac1{n^4} \right)\\
&\quad= 
  -\frac{\gamma^2 \ir}{4\pi n^3} \log\left(\frac{q(\gamma) + \ir}{q(\gamma) - \ir}\right) 
  + \frac{2\ir}{\pi\gamma}\log\left(\frac{2q(\gamma) + 2\ir}{2q(\gamma) - 2\ir}\right) 
    \left(\frac{\gamma^2}{4n^2}\right)^{3/2} + O\left( \frac1{n^4} \right) \\
&\quad= 
  \frac{\gamma^2 \ir}{4\pi n^3} \left( \log\left(\frac{q(\gamma) + \ir}{q(\gamma) - \ir}\right)
  - \log\left(\frac{q(\gamma) + \ir}{q(\gamma) - \ir}\right) \right) + O \left( \frac1{n^4} \right) \\
&\quad= 
  O \left( \frac1{n^4} \right),
\end{split}
\]
thus concluding the proof.

\section{Generalizations and other remarks}
\begin{figure}[h!]
\centerline{
\includegraphics[width=.66\linewidth]{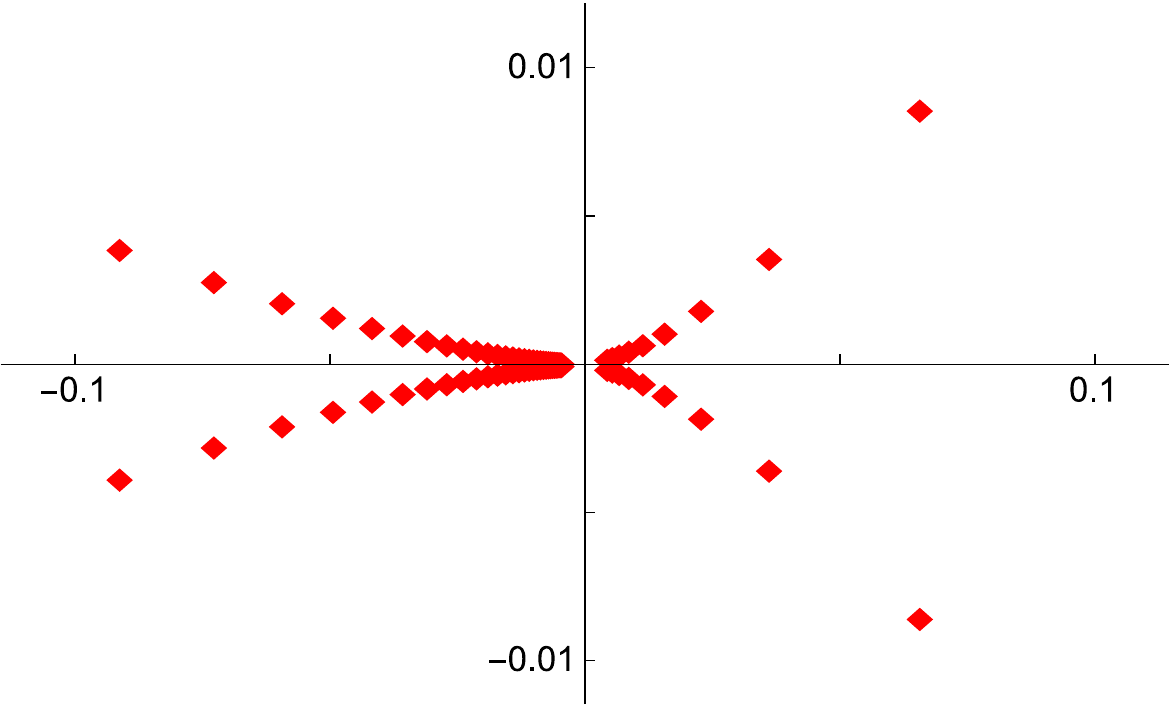}}
\caption{Approximated eigenvalues of $A(\gamma_+, \gamma_-)$ for $\gamma_-=1.5$, $\gamma_+=5$.}
\end{figure}
The procedure used to prove Theorem \ref{thm:1} can be repeated in a completely 
similar way to obtain a result for the operator
\[
A(\gamma_+, \gamma_-) = J T_{V}, 
\quad V(x) = \begin{cases}
\frac{\gamma_+}{1+|x|} & \mbox{if } x > 0\\
\frac{\gamma_-}{1+|x|} & \mbox{if } x < 0
\end{cases}, \quad  \gamma_+, \gamma_- \in \bR_+.
\]
In this case the m-function is of the form
\[
  M(\lambda) = \frac{\phi'_{\mu,\gamma_+}(0)}{\phi_{\mu,\gamma_+}(0)} 
    + \frac{\phi'_{-\mu,\gamma_-}(0)}{\phi_{-\mu,\gamma_-}(0)}.
\]
The curves in the upper (resp. lower) half plane are no more symmetric w.r.t. 
$\ir\bR$, however for the left quadrants and right quadrants we can extend Theorem \ref{thm:1}. 
The only difference is that now the $\Upsilon^-$ and $\Upsilon^+$ 
are now functions of both $\gamma_+$ and $\gamma_-$.

Let $\nu,\eta\in\bR_+$. Set
\begin{align*}
f_-(\nu,\eta) :={}&
  \frac{J_1^2(2\sqrt{\eta}) + J_0^2(2\sqrt{\eta})}
    {J_1^2(2\sqrt{\nu}) + J_0^2(2\sqrt{\nu})} 
  \Big(
  \ir 
  - \pi\sqrt{\nu} \left( J_0(2\sqrt{\nu})J_1(2\sqrt{\nu}) 
    + Y_0(2\sqrt{\nu})Y_1(2\sqrt{\nu}) \right) 
  \Big) \\
  & - \pi\sqrt{\eta} \left( J_0(2\sqrt{\eta})J_1(2\sqrt{\eta}) 
    + Y_0(2\sqrt{\eta})Y_1(2\sqrt{\eta}) \right) 
,\\
f_+(\nu,\eta) :={}&
  \frac{J_1^2(2\sqrt{\eta}) + J_0^2(2\sqrt{\eta})}
    {J_1^2(2\sqrt{\nu}) + J_0^2(2\sqrt{\nu})} 
  \Big(
  \ir 
  + \pi\sqrt{\nu} \left( J_0(2\sqrt{\nu})J_1(2\sqrt{\nu}) 
    + Y_0(2\sqrt{\nu})Y_1(2\sqrt{\nu}) \right) 
  \Big) \\
  & + \pi\sqrt{\eta} \left( J_0(2\sqrt{\eta})J_1(2\sqrt{\eta}) 
    + Y_0(2\sqrt{\eta})Y_1(2\sqrt{\eta}) \right) 
.
\end{align*}
Then the two factor multiplying the term $\Re\mu^{3/2}$ are given by
\begin{align*}
  \Upsilon^-(\gamma_+,\gamma_-) &:= \frac{4}{\pi}
    \begin{cases}
    \gamma_-^{-1}\;\arctan (1/f_-(\gamma_+, \gamma_-))  & \mbox{if } \Re\mu > 0\\
    \gamma_+^{-1}\;\arctan (1/f_-(\gamma_-, \gamma_+)) & \mbox{if } \Re\mu < 0
    \end{cases}, \\
  \Upsilon^+(\gamma_+,\gamma_-) &:= \frac{4}{\pi}
    \begin{cases}
    \gamma_-^{-1}\;\arctan (1/f_+(\gamma_+, \gamma_-))  & \mbox{if } \Re\mu > 0\\
    \gamma_+^{-1}\;\arctan (1/f_+(\gamma_-, \gamma_+))  & \mbox{if } \Re\mu < 0
    \end{cases}.
\end{align*}
One can immediately see that the asymmetry appearing w.r.t. $\ir\bR$ is 
reflected in the asymmetric dependence on $\gamma_+$ and $\gamma_-$. 

It is interesting to observe that for $\Re\mu > 0$ the effect of
$\gamma_-$ is much stronger than the one of $\gamma_+$ (the latter appears
only in the cotangent term, its contribution is bounded, while the former
additionally appears as an inverse prefactor). The situation 
is opposite when $\Re\mu < 0$. 

The expressions for $\Upsilon^{\mp}(\gamma_-,\gamma_+)$ are more involved than the ones for 
$\Upsilon^{\mp}(\gamma)$ but, as expected, they simplify to \eqref{eq:CD} and \eqref{eq:CN} 
for $\gamma_+ = \gamma_-$. As that case, it is possible to use the standard
results on Bessel functions to show that the two constants have non-zero real 
and imaginary part for any $\gamma_\pm > 0$.

To answer the general question posed in \cite{b13} for a wider class of potentials one would need good estimates of the Jost functions in a complex half ball containing the origin and the positive and negative real axis. 
To our knowledge, the best result of this kind is contained in a paper by Yafaev \cite{yafaev}. In that work, however, the author needed to exclude two cones containing the real axis for his estimates to hold. 
Additionally he could get only the first term in the asymptotic expansion, whereas for our result we would need at least the first two.

\numberwithin{equation}{section}
\numberwithin{theorem}{section}
\appendix
\section{Uniform asymptotic expansion of Kummer Hypergeometric functions}

We need to approximate
\begin{equation}\label{eq:UtoApprox}
  \wU\Big(-\frac{\gamma}{2 \sqrt{-\mu }},c;\, 2 \sqrt{-\mu }\Big),
  \qquad
  \wU\Big(-\frac{\gamma}{2 \sqrt{\mu }},c;\, 2 \sqrt{\mu }\Big),
  \quad c\in\{0,-1\},
\end{equation}
in the limit $\mu\to0$.
We use the theory developed in
\cite[Chapter 27]{temme}. By formula \cite[(27.4.85)]{temme}, as $a\to\infty$, and with $az$ bounded and $\Re(az)>0$, 
\begin{equation}\label{eq:UApprox}
\wU(-a, c;\, az) \sim \beta^{1-c}\, \Gamma(a+1) \er^{\frac12 a z}
  \left(
  C_{c-1}(\zeta) \sum_{n=0}^{\infty} \frac{A_n}{a^n} 
  + \beta C_{c-2}(\zeta) \sum_{n=0}^{\infty} \frac{B_n}{a^n} 
  \right),
\end{equation}
where $A_n$ and $B_n$ are defined by an iterative procedure, 
$\zeta = 2 \beta a$ and 
\begin{equation}\label{eq:cnuzeta}
C_\nu(\zeta) = \cos(\pi a) J_\nu(\zeta) + \sin(\pi a) Y_\nu(\zeta).
\end{equation}

On can immediately see that in our case
\begin{equation}\label{eq:az}
a = a_\pm = \frac{\gamma}{2\sqrt{\pm\mu}}, \qquad 
z = z_\pm = \frac{\gamma}{a_\pm^2}.
\end{equation} 
We will drop the subscript $\pm$ for the rest of the discussion.

Additionally we need an expression for $\beta$. This is defined 
in \cite[(27.4.36)]{temme} as 
\[
\beta = \frac12 (w_0 + \sinh(w_0))
\]
where $w_0 = 2 \arcsinh(\frac12 \sqrt{z})$, see \cite[(27.4.33)]{temme}. 
Equation \cite[(27.4.52)]{temme} gives an asymptotic expansion for $\beta$ as $z\to 0$:
\[
\beta^2 = z + \frac1{12} z^2 + O(z^3).
\]
By Taylor expansion we get
\begin{equation}\label{eq:beta}
\beta = \sqrt{z} \sqrt{1 + \frac1{12} z + O(z^2)} 
      = \sqrt{z} \left( 1 + \frac1{24} z + O(z^2) \right).
\end{equation}
Therefore
\[
\zeta = 2 \beta a = 2 \sqrt{\gamma} \left(1 + \frac \gamma{24} a^{-2} + O(a^{-4}) \right)
\]
and $\Re(az) = \Re(\gamma/a)$.

Observe that for $\gamma\in\bR_+$, $\Re(az) > 0$ iff $\Re(a)>0$.

The coefficients $A_0$ and $B_0$ also have explicit expressions that can be derived using some symmetry properties and L'H{\^ o}pital rule, see \cite[(27.4.74)]{temme}):
\begin{align*}
A_0 = \left(\frac{\beta}{2 \sin(\theta)}\right)^c \sqrt{\frac2\beta\tan\theta}\;\cos(c\theta) \\
B_0 = \left(\frac{\beta}{2 \sin(\theta)}\right)^c \sqrt{\frac2\beta\tan\theta}\;\frac{\sin(c\theta)}{\beta}
\end{align*}
where $\theta = - \frac12 \ir w_0$.

The computation of $A_n$ and $B_n$ for $n\geq0$ is quite involved, 
however we will need only $A_1$.
One can exploit the procedure to compute $A_0$ and $B_0$,
and the recursive definition of the coefficients to get a Taylor approximation 
in negative powers of $a$ for $c\in\{0,-1\}$.
We get
\begin{align}
\label{coeff:0}\mbox{if }c=0,{}& \quad A_0^0 = 1 + O(a^{-2}), \quad A_1^0 = -\frac{5}{16} + O(a^{-2}), \quad \beta B_0^0 = 0, \\
\label{coeff:-1}\mbox{if }c=-1,{}& \quad A_0^{-1} = 1 + O(a^{-4}), \quad A_1^{-1} = -\frac{11}{16} + O(a^{-2}), \quad \beta B_0^{-1} = -\frac{\sqrt{\gamma}}{2a} + O(a^{-3}).
\end{align}

With these, \eqref{eq:UApprox} can be re-written
\begin{equation}\label{eq:approxUexp}
\wU\left(-a, c;\, \frac{\gamma}{a}\right) \sim 
  \left(\frac{\sqrt{\gamma}}{a}\right)^{\frac{1-c}2}
  \Gamma(a+1)
  \, \er^{\frac{\gamma}{2a}}
  \left(
  \tilde C_{c-1}(a,\gamma)\, (\tilde A^c_0 + \tilde A^c_1) 
  + \tilde C_{c-2}(a,\gamma)\, \tilde B^c_0
  + O(a^{-2}) 
  \right)
\end{equation}
where
\begin{equation}\label{eq:Ctilde}
\tilde C_{\nu}(a,\gamma) :=
  \cos(\pi a) J_{\nu}(2\sqrt{\gamma}) + \sin(\pi a) Y_{\nu}(2\sqrt{\gamma})
\end{equation}
and $\tilde A^c_0$, $\tilde A^c_1$ and $\tilde B^c_0$ are obtained dropping the error term in the appropriate coefficient in \eqref{coeff:0} and \eqref{coeff:-1}.

\begin{figure}
\centerline{\includegraphics[width=0.7\linewidth]{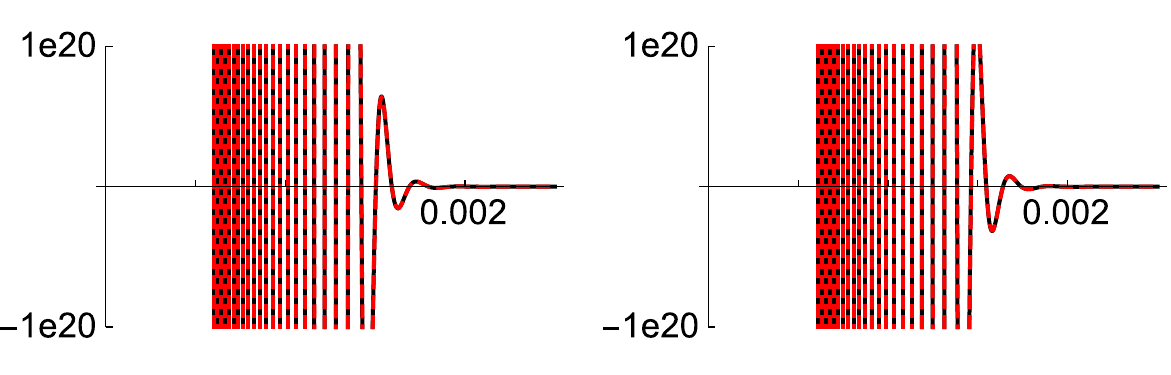}}
\caption{Plot of real part (left) and imaginary part (right) of $\wU\left(-\frac{\gamma}{2 \sqrt{-\mu }},c;\, 2 \sqrt{-\mu }\right)$ (black) and its approximation given by \eqref{eq:approxUexp} (dashed red) for small values of $\mu$ and $\gamma=2.5$.\label{fig:Uosc}}
\end{figure}

\begin{remark}
Here the error is in fact $O(\gamma/a^2)$, we may thus expect the improvement in the precision of the asymptotics when $\gamma \gg 1$.
\end{remark}

\begin{remark}[Validity of the expansion]
If we define
\[
t_1 = \beta + \pi \ir + \sqrt{(\beta + \pi \ir)^2 - \beta^2},
\]
then the asymptotic formula \eqref{eq:UApprox} is valid for
\begin{equation}\label{eq:conditions}
  -\arg t_1 - \frac\pi2 + \delta 
  \leq \arg a 
  \leq \arg t_1 + \frac\pi2 - \delta,
\end{equation}
and the same applies to $z$ (see \cite[Chapter 26.4.2]{temme}).

In our case
\[
t_1 \sim 
2 \pi \ir + 2 \beta + O(\beta^2).
\]
For $|a| \gg 1$, $\arg t_1$ is in the upper complex half plane. 
In particular this allows $a$ and $z$ to be in the closure of the 
first and fourth quadrant.
\end{remark}

\section{Aknowledgements}
The research of M. Seri has been supported by the EPSRC grant EP/J016829/1.
We would like to thank Jussi Behrndt for bringing the problem to our attention and stimulating discussions, and  
Adri Olde Daalhuis, Fritz Gesztesy, and Niko Temme for useful comments and references provided. We also acknowledge the hospitality of the Isaac Newton Institute for Mathematical Sciences in Cambridge, where this work was completed during the programme \emph{Periodic and Ergodic Spectral Problems}.


\end{document}